\DeclareSymbolFont{cyrletters}{OT2}{wncyr}{m}{n}
\DeclareMathSymbol{\Sha}{\mathalpha}{cyrletters}{"58}
\definecolor{Green}{rgb}{0.0, 0.5, 0.0}
\newcommand{\Qd}[1]{\QQ(\sqrt{-{#1}})}
\DeclareMathOperator{\Hom}{Hom}
\DeclareMathOperator{\Gal}{Gal}
\DeclareMathOperator{\alg}{alg}
\DeclareMathOperator{\cyc}{cyc}
\DeclareMathOperator{\ord}{ord}
\DeclareMathOperator{\Cl}{Cl}
\DeclareMathOperator{\coker}{coker}
\DeclareMathOperator{\Sel}{Sel}
\DeclareMathOperator{\charac}{char}
\DeclareMathOperator{\lcm}{lcm}
\newcommand{\QQ}{\mathbb Q}
\newcommand{\ZZ}{\mathbb Z}
\newcommand{\CC}{\mathbb C}
\newcommand{\Zp}{\mathbb{Z}_p}
\newcommand{\Qp}{\mathbb{Q}_p}
\newcommand{\Zq}{\mathbb{Z}_q}
\newcommand{\cF}{\mathcal F}
\newcommand{\cO}{\mathcal O}
\newcommand{\fa}{\mathfrak a}
\newcommand{\ff}{\mathfrak f}
\newcommand{\fb}{\mathfrak b}
\newcommand{\fp}{\mathfrak p}
\newcommand{\p}{\mathfrak p}
\newcommand{\fq}{\mathfrak q}
\newcommand{\fP}{\mathfrak P}
\newcommand{\fQ}{\mathfrak Q}
\newcommand{\fg}{\mathfrak g}
\newcommand{\fr}{\mathfrak r}
\newcommand{\fh}{\mathfrak h}
\newcommand{\cC}{\mathcal{C}}
\newcommand{\cZ}{\mathcal{Z}}
\newcommand{\sR}{\mathscr{R}}
\newcommand{\cG}{\mathcal{G}}
\newcommand{\MQp}{\mathfrak{m}_{\overline{\Qp}}}
\newcommand{\q}{\mathfrak{q}}
\newcommand{\ac}{{\mathrm{ac}}}
\newtheorem*{Theorem*}{Theorem}
\newtheorem{Th}{Theorem}[section]
\newtheorem{Lemma}[Th]{Lemma}
\newtheorem*{Ques*}{Question}
\newtheorem{Cor}[Th]{Corollary}
\newtheorem*{conj*}{Conjecture}
\newtheorem{lthm}{Theorem}
\theoremstyle{definition}
\newtheorem{Defi}[Th]{Definition}
\theoremstyle{remark}
\newtheorem{rem}[Th]{Remark}
\begin{document}
\title[$p\neq q$ Iwasawa theory of class groups and fine Selmer groups]{Growth of $p$-parts of ideal class groups and fine Selmer groups in $\mathbb{Z}_q$-extensions with $p\neq q$}

\author[D.~Kundu]{Debanjana Kundu}
\address[Kundu]{Fields Institute \\ University of Toronto \\
 Toronto ON, M5T 3J1, Canada}
\email{dkundu@math.toronto.edu}

\author[A.~Lei]{Antonio Lei}
\address[Lei]{Department of Mathematics and Statistics\\University of Ottawa\\
150 Louis-Pasteur Pvt\\
Ottawa, ON\\
Canada K1N 6N5}
\email{antonio.lei@uottawa.ca}

\date{\today}

\keywords{Ideal class groups, fine Selmer groups, $p\neq q$ Iwasawa theory}
\subjclass[2020]{Primary 11R23, 11R29; Secondary 11R20, 11J95}

\begin{abstract}
Fix two distinct odd primes $p$ and $q$.
We study "$p\ne q$" Iwasawa theory in two different settings.\newline
(1) Let $K$ be an imaginary quadratic field of class number 1 such that both $p$ and $q$ split in $K$.
We show that under appropriate hypotheses, the $p$-part of the ideal class groups is bounded over finite subextensions of an anticyclotomic $\mathbb{Z}_q$-extension of $K$.\newline
(2) Let $F$ be a number field and let $A_{/F}$ be an abelian variety with $A[p]\subseteq A(F)$.
We give sufficient conditions for the $p$-part of the fine Selmer groups of $A$ over finite subextensions of a $\mathbb{Z}_q$-extension of $F$ to stabilize.
\end{abstract}

\maketitle

\section{Introduction}
Let $F/\QQ$ be an algebraic number field and $F_\infty/F$ be a Galois extension with Galois group isomorphic to the additive group $\Zq$ of $q$-adic integers.
For each integer $n\geq 0$, there is a unique subfield $F_n/F$ of degree $q^n$.
Let $h(F_n)$ be the class number of $F_n$.
K.~Iwasawa showed that if $q^{e_n}$ is the highest power of $q$ dividing $h(F_n)$, then there exist integers $\lambda, \mu,\nu$ independent of $n$, such that $e_n = \mu q^n + \lambda n + \nu$ for $n\gg 0$.
On the other hand, in \cite{Was75, Was78}, L.~C.~Washington proved that for distinct primes $p$ and $q$, the $p$-part of the class number stabilizes in the \emph{cyclotomic} $\ZZ_q$-extension of an abelian number field.
Washington's results have been extended to other $\Zq$-extensions where primes are finitely decomposed.
In particular, J.~Lamplugh proved the following in \cite{Lam15}: if $p,q$ are distinct primes $\geq 5$ that split in an imaginary quadratic field $K$ of class number 1 and $F/K$ is a prime-to-$p$ abelian extension which is also unramified at $p$, then the $p$-class group stabilizes in the $\Zq$-extension of $F$ which is unramified outside precisely one of the primes above $q$.
There have also been speculations by J.~Coates on the size of the whole class group in a cyclotomic tower; see \cite{coates12}, especially the discussion in \S3 and Conjecture D.

Let $p$ and $q$ be two distinct odd primes and $K$ an imaginary quadratic field of class number 1 in which both $p$ and $q$ split.
We write $p\cO_K=\p\overline{\p}$ and $q\cO_K=\q\overline{\q}$.
Given an ideal $\fh$ of $\cO_K$, we write $\sR(\fh)$ for the ray class field of $K$ of conductor $\fh$.
In the first half of this article, we study the growth of the $p$-part of the ideal class group in a $\Zq$-anticyclotomic tower.
This generalizes \cite[Theorem~1.3]{Lam15}, where the stability of the $p$-part of the class numbers $\sR(\fg \fq^n)$ is studied.
More precisely, we prove the following result.

\begin{lthm}
\label{thmA}
Let $K$ be an imaginary quadratic field of class number 1.
Let $p$ and $q$ be distinct primes ($\geq 5$) which split in $K$.
Let $\fr$ be a fixed ideal of $\cO_K$ coprime to $pq$ such that $\fr$ is a product of split primes\footnote{In this article, a split prime of $K$ refers to a prime ideal of $\cO_K$ that lies above a rational prime that splits in $K$.}.
Let $\mathcal{F}= \sR(\fr q)$.
We assume that $p\nmid[\mathcal{F}:K]$.
Let $\sR(\fr q^\infty)^{\ac}/\mathcal{F}$ denote the anticyclotomic $\Zq$-extension and write $\cF_n$ for the unique subextension of $\sR(\fr q^\infty)^{\ac}/\mathcal{F}$ whose degree is $q^n$.
Then there exists an integer $N$ such that for all $n\ge N$,
\[
\ord_p(h(\mathcal{F}_n))=\ord_p(h(\mathcal{F}_N)).
\]
\end{lthm}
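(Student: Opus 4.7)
The plan is to follow the Iwasawa-theoretic strategy of Washington and Lamplugh: one exploits the incompatibility between $p$ and $q$ to force eventual triviality of the relevant Galois action on an Iwasawa module, then uses a control theorem to descend this stabilization to the $p$-parts of the class groups.

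First, I would set up the standard Iwasawa module. Let $L_\infty$ denote the maximal abelian unramified pro-$p$ extension of $\mathcal{F}_\infty$, and put $X := \Gal(L_\infty/\mathcal{F}_\infty)$, a compact $\Zp[[\Gamma]]$-module for $\Gamma := \Gal(\mathcal{F}_\infty/\mathcal{F}) \cong \Zq$. Writing $\Gamma_n := \Gamma^{q^n}$ and $A_n := \Cl(\mathcal{F}_n)\otimes \Zp$, class field theory yields natural maps $X_{\Gamma_n} \to A_n$ whose kernels and cokernels are controlled by ramification at primes above $q$ and by decomposition at primes above $p$. The heart of the proof is to show that $X$ is a finitely generated $\Zp$-module. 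Since only primes above $q$ ramify in $\mathcal{F}_\infty/\mathcal{F}$ (contributing prime-to-$p$ inertia) and $p$ splits in $K$, one checks that every prime of $\mathcal{F}$ above $p$ has open decomposition group in $\Gamma$; a Greenberg-style analysis of the unramified pro-$p$ tower then bounds the $\Zp$-rank of $X$ uniformly.

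With $X$ finitely generated over $\Zp$, the ``$p \neq q$'' phenomenon takes over: the $\Gamma$-action on $X$ gives a continuous homomorphism $\Gamma \to \Aut_{\Zp}(X) \hookrightarrow \GL_r(\Zp)$ for some $r$, and since the kernel of $\GL_r(\Zp)\to \GL_r(\bF_p)$ is pro-$p$ while $\Gamma$ is pro-$q$, this homomorphism has finite image. Therefore $\Gamma_N$ acts trivially on $X$ for some $N$, giving $X_{\Gamma_n} = X_{\Gamma_N}$ for all $n \geq N$. A control theorem, together with a careful bookkeeping of error terms (which themselves stabilize once $n \geq N$ is large enough to absorb all capitulation and ramification effects), then yields $\ord_p(h(\mathcal{F}_n)) = \ord_p(h(\mathcal{F}_N))$ for $n \geq N$.

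The main obstacle is the finite generation of $X$ as a $\Zp$-module. In contrast to \cite{Lam15}, where the $\Zq$-extension is unramified outside a single prime above $q$, both $\q$ and $\overline{\q}$ ramify symmetrically in the anticyclotomic tower, so Lamplugh's argument does not transcribe directly. A more refined Galois-theoretic analysis of $\sR(\fr q^\infty)^{\ac}/K$ will be needed; the hypothesis $p\nmid [\mathcal{F}:K]$ should permit a descent to the tower over $K$ itself, where the assumption that $K$ has class number $1$ keeps the computations concrete and finite-dimensional.
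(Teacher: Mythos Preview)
Your proposal has a genuine gap at its central step: the claim that $X$ is finitely generated over $\Zp$. This is essentially equivalent to the theorem itself, and the ``Greenberg-style analysis'' you invoke does not exist in the form you need. First, bounding the $\Zp$-rank of $X$ is not the same as showing $X$ is finitely generated over $\Zp$; you silently slide from the former to the latter, but it is precisely the $p$-torsion of $X$ that encodes the growth of $\ord_p(h(\cF_n))$. Second, and more fundamentally, there is no purely structural argument that produces either statement: already in Washington's cyclotomic case the boundedness requires deep analytic input (non-vanishing of $L$-values modulo $p$ via linear forms in $p$-adic logarithms), and the anticyclotomic case is no easier. Notice that the hypotheses that $K$ has class number $1$ and that $\fr$ is a product of \emph{split} primes play no role whatsoever in your outline; this is a warning sign, because in the paper these hypotheses are exactly what is needed to invoke Hida's non-vanishing theorem.

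The paper's argument is entirely different in architecture. One introduces an auxiliary CM elliptic curve $E/K$ with Hecke character $\psi$ of conductor $\ff$ and works not over the $\Zq$-tower directly but over the $\Zp$-towers $L_{n,\infty}=F_n(E_{\fp^\infty})$, one for each layer $F_n$ of the anticyclotomic $\Zq$-extension attached to $\fg=\lcm(\ff,\fr)$. Hida's theorem guarantees that for all but finitely many anticyclotomic characters $\varphi$ the algebraic $L$-value $L^{\alg}_{\fh}(\overline{\varphi\psi^k})$ is a $\pi$-unit; Lamplugh's Theorem~7.7 then forces $U_{n,\infty}^\chi=\overline{\cC}_{n,\infty}^\chi$ for the new characters, and Rubin's main conjecture (together with Greenberg's no-finite-submodule result) converts this into $X_{n,\infty}^I=X_{N,\infty}^I$ for $n\ge N$. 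Descent along the totally ramified extension $L_{n,\infty}/F_n$ gives stabilization of $A(F_n)$, and a final monotonicity step passes from $\fg$ back to $\fr$. The analytic input (Hida) and the main conjecture (Rubin) carry the entire weight; neither appears in your sketch.
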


The hypothesis on $\fr$ being a product of split primes is crucial for the use of a theorem of H.~Hida, which guarantees the non-vanishing modulo $p$ of the algebraic $L$-values of anticyclotomic characters factoring through $\sR(\fr q^\infty)^\ac$ (see Theorem~\ref{thm:hida}).
To prove Theorem~\ref{thmA}, we link this non-vanishing to the stabilization of the $p$-class groups via the ($p$-adic) Iwasawa main conjecture proved by K.~Rubin \cite{Rub91}.
Our strategy is inspired by the work of Lamplugh \cite{Lam15}, which we outline below.

In \S\ref{S:CM}, we introduce an auxiliary elliptic curve $E_{/K}$ with CM by $\cO_K$ such that the conductor $\ff$ of its Hecke character is a product of split primes in $K$ with $p\nmid [\sR(\ff):K]$.
Let $\fg=\lcm(\ff,\fr)$.
By a result of Lamplugh, when the algebraic $L$-value of certain Hecke character is nonzero modulo $p$, the corresponding modules of local $p$-adic units and elliptic units over an extension generated by $E[\fp^\infty]$ coincide after taking appropriate isotypic components (see Theorem~\ref{th:L-values-trivial} for the precise statement).
Combining this with Hida's theorem, we prove in Theorem~\ref{key result for class group} that the $p$-primary Galois modules featured in the Iwasawa main conjecture stabilize in the anticyclotomic $\Zq$-extension $\sR(\fg q^\infty)^\ac/\sR(\fg q)$.
This can be translated into a statement on $p$-class groups, proving a special case of Theorem~\ref{thmA}, where the ideal $\fr$ is divisible by $\ff$ (see Theorem~\ref{auxiliary}).
To complete the proof of Theorem~\ref{thmA}, we bound the $p$-class groups over the tower $\sR(\fr q^\infty)^\ac/\sR(\fr q)$ by those over $\sR(\fg q^\infty)^\ac/\sR(\fg q)$.

In the second half of the article, we prove a general statement (see Theorem \ref{class group and fine Selmer group}) which shows that in certain $\Zq$-extensions of a number field $F$, the growth of the $p$-part of the class group is closely related to that of the $p$-primary \emph{fine} Selmer group of an abelian variety $A_{/F}$.
This subgroup of the classical $p$-primary Selmer group is denoted by $\Sel_0(A/F)$, and is obtained by imposing stronger vanishing conditions at primes above $p$ (the precise definition is reviewed in \S\ref{sec:notation}).
The following result is an application of the aforementioned theorem to the growth of the $p$-part of fine Selmer group of a fixed abelian variety $A$ over a $\Zq$-tower (which is not necessarily anticyclotomic).
\begin{lthm}
\label{thmB}
Let $p$ and $q$ be distinct odd primes.
Let $F$ be any number field and $A_{/F}$ be an abelian variety such that $A[p]\subseteq A(F)$.
Let $F_\infty/F$ be a $\Zq$-extension where the primes above $q$ and the primes of bad reduction of $A$ are finitely decomposed.
If there exists $N \ge 0$ such that for all $n\geq N$,
\[
\ord_{p}(h(F_n)) = \ord_{p}(h(F_N)),
\]
then there exists an integer $N'\ge N$ such that for all $n\ge N'$, there is an isomorphism
\[
\Sel_0(A/F_n) \simeq\Sel_0(A/F_{N'}).
\]
\end{lthm}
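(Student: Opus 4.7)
The plan is to apply Theorem~\ref{class group and fine Selmer group}, which relates the growth of $p$-parts of class groups to that of $p$-primary fine Selmer groups in suitable $\Zq$-towers, and then to convert the resulting size stabilization into an abstract group isomorphism using the pro-$q$ nature of $\Gal(F_\infty/F)$ together with the crucial hypothesis $p\neq q$.

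First, I would verify that the hypotheses of Theorem~\ref{class group and fine Selmer group} are satisfied for the given tower $F_\infty/F$: the condition $A[p]\subseteq A(F)$ is part of the assumption, and the finite decomposition of the primes above $q$ and of the primes of bad reduction of $A$ is also given. Applying that theorem, the stabilization of $\ord_p(h(F_n))$ for $n\geq N$ translates into a statement that the $p$-primary groups $\Sel_0(A/F_n)$ are of bounded size (equivalently, their Pontryagin duals stabilize as finite $\ZZ_p$-modules) for $n\geq N$.

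Second, I would upgrade this size bound to an eventual isomorphism via a direct-limit/control argument. Let $\Sel_0(A/F_\infty):=\varinjlim_n\Sel_0(A/F_n)$, formed using the restriction maps on Galois cohomology. Because $\Gal(F_\infty/F_n)\simeq\Zq$ is pro-$q$ and $\Sel_0(A/F_\infty)$ is $p$-primary with $p\neq q$, the higher cohomology groups $H^i(\Gal(F_\infty/F_n),\Sel_0(A/F_\infty))$ vanish for $i\geq 1$. Applying the Hochschild--Serre spectral sequence to the defining short exact sequences of the fine Selmer group shows that the kernel and cokernel of the restriction map
\[
\Sel_0(A/F_n)\longrightarrow\Sel_0(A/F_\infty)^{\Gal(F_\infty/F_n)}
\]
are supported on the finitely many primes (above $q$, above $p$, and of bad reduction of $A$) whose local behavior is governed by the finite decomposition hypothesis. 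These local contributions are therefore bounded uniformly in $n$. Since the sources have bounded size by the previous step, $\Sel_0(A/F_\infty)$ is finite, and the restriction map $\Sel_0(A/F_{N'})\to\Sel_0(A/F_n)$ is an isomorphism for all $n\geq N'$, provided $N'$ is large enough to stabilize all local contributions.

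The main obstacle I anticipate is the control argument at the level of local conditions: one must verify that the strict local conditions defining $\Sel_0$ at primes above $p$ behave well under restriction along $F_\infty/F_n$, and that the local cohomologies at primes above $q$ and of bad reduction stabilize once $n$ is sufficiently large. The hypothesis $p\neq q$ is essential here, as it forces the cohomology of the pro-$q$ group $\Gal(F_\infty/F_n)$ with coefficients in any $p$-primary module to vanish in positive degree — a clean feature which is absent from classical $p=q$ Iwasawa theory, where one is instead forced to work with $\Lambda$-modules rather than finite $p$-groups.
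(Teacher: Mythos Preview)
There is a genuine gap in your first step. Theorem~\ref{class group and fine Selmer group} only asserts that
\[
\abs{r_{p}\left(\Sel_0\left(A/F_n\right)\right) - 2d\, r_{p}\left(\Cl(F_n)\right)} =O(1),
\]
so from the stabilization of $\ord_p(h(F_n))$ you obtain a uniform bound on the \emph{$p$-rank} $r_p(\Sel_0(A/F_n))=\dim_{\bF_p}\Sel_0(A/F_n)[p]$, not on the order of $\Sel_0(A/F_n)$. These groups are only known a priori to be cofinitely generated over $\Zp$, so they may decompose as $(\Qp/\Zp)^{s_n}\oplus T_n$ with $s_n>0$; bounded $p$-rank is compatible with $s_n>0$ and with $|T_n|\to\infty$. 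Your entire second step is predicated on the false ``bounded size'' conclusion (you use it to force $\Sel_0(A/F_\infty)$ to be finite), so the control argument as written does not close.

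The missing ingredient is exactly the content of Theorem~\ref{whole FSG stabilizes} in the paper, and it does not come from a Hochschild--Serre control theorem but from the restriction--corestriction identity. Since $\mathrm{cores}\circ\mathrm{res}$ is multiplication by $q^{m-n}$, which is invertible on the $p$-primary group $\Sel_0(A/F_n)$, the restriction map $\Sel_0(A/F_n)\hookrightarrow\Sel_0(A/F_m)$ is a \emph{split} injection, giving $\Sel_0(A/F_m)\simeq \Sel_0(A/F_n)\oplus C_{m,n}$. In particular the $p$-ranks $r_p(\Sel_0(A/F_n))$ are non-decreasing in $n$; being bounded, they eventually stabilize, and then $r_p(C_{m,n})=0$, whence $C_{m,n}=0$ (a $p$-primary group with trivial $p$-torsion is trivial). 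This is the step that converts a bound on $p$-ranks into an actual isomorphism of fine Selmer groups, and it should replace your second paragraph. Your Hochschild--Serre argument, even if carried out carefully, would at best reproduce the injectivity of restriction; it is the corestriction splitting that does the real work.
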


In particular, Theorem~\ref{thmB} applies to the setting studied by Washington \cite{Was75,Was78}.
Finally, we remark that unlike what we have found for fine Selmer groups in Theorem~\ref{thmB}, it has been shown by T.~Dokchitser and V.~Dokchitser that the $p$-part of the Tate--Shafarevich group of an abelian variety in a $\Zq$-tower can be unbounded; see \cite[Example~1.5]{Dok15}.

\subsection*{Acknowledgement}
We thank Ming-Lun Hsieh, Filippo A.~E.~Nuccio Mortarino Majno di Capriglio, and Lawrence Washington for answering our questions during the preparation of this article.
We are also indebted to the anonymous referees for their valuable comments and suggestions on earlier versions of the article.
DK acknowledges the support of a PIMS Postdoctoral Fellowship.
AL is supported by the NSERC Discovery Grants Program RGPIN-2020-04259 and RGPAS-2020-00096.

\section{Finding auxiliary CM elliptic curves}
\label{S:CM}
Let $K= \Qd{d}$ be an imaginary quadratic field of class number 1.
As discussed in the introduction, we shall work with an auxiliary CM elliptic curve $E_{/K}$ in order to prove Theorem~\ref{thmA}.
Recall that the imaginary quadratic fields of class number 1 are precisely the following
\[
\Qd{1}, \ \Qd{2}, \ \Qd{3}, \ \Qd{7}, \ \Qd{11}, \ \Qd{19}, \ 
\Qd{43}, \Qd{67}, \ \Qd{163}.
\]
For each choice of $K$, we shall write down an explicit elliptic curve $E_{/K}$ such that
\begin{enumerate}[(a)]
 \item $E$ has CM by $\cO_K$;
 \item If $\ff$ denotes the conductor of the Hecke character $\psi$ attached to $E$, then $\ff$ is only divisible by split primes of $K$;
 \item The rational primes dividing $[\sR(\ff):K]$ are either $2,3$ or primes that are non-split in $K$. 
\end{enumerate}
We remark that condition (c) ensures that the prime $p$ in the statement of Theorem~\ref{thmA} does not divide $[\sR(\ff):K]$.

If $E_{/K}$ is an elliptic curve with CM by an order $\cO$ in $K$, the $j$-invariant $j(E)$ is an integer in this case, so $E$ must be a twist of the base extension of an elliptic curve $A_{/\QQ}$.
For $d>3$, $A$ is uniquely determined (up to isomorphism over $K$) by the condition that it has CM by $\cO_K$ and its base change to $K$ has bad reduction at the ramified prime $\fP = (\sqrt{-d})$.
For $d=1,2,$ and $3$ there are several choices for the elliptic curve over $\QQ$ (see \cite[Remark~3.1]{CP19}).

When $d>3$, it follows from \cite[Theorem~3.3]{CP19} that if we twist $A_{/K}$ by a character corresponding to $K(\sqrt{\alpha})$ where $\alpha = \fP \fQ$ such that $\fQ$ is a prime of $K$ distinct from $\fP$ satisfying $\fQ\equiv u^2\sqrt{-d}\mod 4\cO_K$ for some $u\in \cO_K$, then the twisted elliptic curve (over $K$) has good reduction everywhere except at $\fQ$.
Therefore, for our purposes, it is enough to find such $\fQ$ that is a split prime in $K$. Indeed, we may choose $r\in\ZZ$ such that $(4r+\sqrt{-d})(4r-\sqrt{-d})=16r^2+d$ is an odd rational prime.
Such $r$ exists for all possible values of $d$.
For example, we may take $r$ to be $1$ when $d=43, 67, 163$.
Then $4r+\sqrt{-d}$ is a split prime of $K$ and $4r+\sqrt{-d}\equiv 1^2\sqrt{-d}\mod 4\cO_K$.
In particular, we may apply \cite[Theorem 3.3]{CP19} with $\fQ=(4r+\sqrt{-d})$ and $u=1$, resulting in a CM curve $E$ satisfying properties (a) and (b) above.

When $d<43$, we find $E_{/K}$ by inspection using the data available on \cite{lmfdb}.
In all our examples below, $E_{/K}$ has bad reduction at one or two split primes which are coprime to $6$.
In particular, the conductor of $E_{/K}$ is given by square of the product of the bad prime(s), whereas the conductor $\ff$ of the Hecke character $\psi$ attached to $E$ is given by the product of the bad prime(s) (see \cite[Theorem~12]{ST68}).
The ray class group $\Gal(\sR(\ff)/K)$ (and hence $[\sR(\ff):K]$) is computed using MAGMA \cite{magma}.

\begin{table}[H]
\begin{tabular}{c|c|c|c|c|c} 
 $d$ & $A_{/\QQ}$ & base change curve $A_{/K}$ & twisted curve $E_{/K}$ & bad prime(s) of $E_{/K}$ & $[\sR(\ff):K]$\\ [1 ex]
 \hline
 $1$ & \href{https://www.lmfdb.org/EllipticCurve/Q/64/a/4}{64.a4} & \href{https://www.lmfdb.org/EllipticCurve/2.0.4.1/256.1/CMa/1}{2.0.4.1-256.1-CMa1} & \href{https://www.lmfdb.org/EllipticCurve/2.0.4.1/25.1/CMa/1}{2.0.4.1-25.1-CMa1} & $2+\sqrt{-1}$&1 \\
 $2$ & \href{https://www.lmfdb.org/EllipticCurve/Q/256/d/1}{256.d1} & \href{https://www.lmfdb.org/EllipticCurve/2.0.8.1/1024.1/CMb/1}{2.0.8.1-1024.1-CMb1} & \href{https://www.lmfdb.org/EllipticCurve/2.0.8.1/9.3/CMa/1}{2.0.8.1-9.3-CMa1} & $1-\sqrt{-2}$&$1$ \\
 $3$ & \href{http://www.lmfdb.org/EllipticCurve/Q/27/a/4}{27.a4} & \href{http://www.lmfdb.org/EllipticCurve/2.0.3.1/81.1/CMa/1}{2.0.3.1-81.1-CMa1} & \href{https://www.lmfdb.org/EllipticCurve/2.0.3.1/2401.3/CMa/1}{2.0.3.1-2401.3-CMa1} & $2\pm\sqrt{-3}$ & 6 \\
 $7$ & \href{http://www.lmfdb.org/EllipticCurve/Q/49/a/4}{49.a4} & \href{http://www.lmfdb.org/EllipticCurve/2.0.7.1/49.1/CMa/1}{2.0.7.1-49.1-CMa1} & \href{http://www.lmfdb.org/EllipticCurve/2.0.7.1/1849.1/CMa/1}{2.0.7.1-1849.1-CMa1} & $6-\sqrt{-7}$& 21 \\
 $11$ & \href{http://www.lmfdb.org/EllipticCurve/Q/121/b/2}{121.b2} & \href{http://www.lmfdb.org/EllipticCurve/2.0.11.1/121.1/CMa/1}{2.0.11.1-121.1-CMa1} & \href{http://www.lmfdb.org/EllipticCurve/2.0.11.1/9.1/CMa/1}{2.0.11.1-9.1-CMa1} & $\frac{-1-\sqrt{-11}}{2}$ &1\\
 $19$ & \href{http://www.lmfdb.org/EllipticCurve/Q/361/a/2}{361.a2} & \href{http://www.lmfdb.org/EllipticCurve/2.0.19.1/361.1/CMa/1}{2.0.19.1-361.1-CMa1} & \href{http://www.lmfdb.org/EllipticCurve/2.0.19.1/49.3/CMa/1}{2.0.19.1-49.3-CMa1} & $\frac{-1+\sqrt{-19}}{2}$& 3 \\
 $43$ & \href{http://www.lmfdb.org/EllipticCurve/Q/1849/b/2}{1849.b2} & \href{http://www.lmfdb.org/EllipticCurve/2.0.43.1/1849.1/CMa/1}{2.0.43.1-1849.1-CMa1} & $\alpha = -43 + 4\sqrt{-43}$ & $4+\sqrt{-43}$& 29 \\
 $67$ & \href{http://www.lmfdb.org/EllipticCurve/Q/4489/b/2}{4489.b2} & \href{http://www.lmfdb.org/EllipticCurve/2.0.67.1/4489.1/CMa/1}{2.0.67.1-4489.1-CMa1} & $\alpha = -67 + 4\sqrt{-67}$ & $4+\sqrt{-67}$& 41 \\
 $163$ & \href{http://www.lmfdb.org/EllipticCurve/Q/26569/a/2}{26569.a2} & curve not in database & $\alpha = -163 + 4\sqrt{-163}$ & $4+\sqrt{-163}$& 89 \\
\end{tabular}
\end{table}

\section{A result of Hida on \texorpdfstring{$L$}{}-values of anticyclotomic Hecke characters}

Throughout this section and the next, $K$ is a fixed imaginary quadratic field of class number 1.
We fix an elliptic curve $E_{/K}$ with CM by $\cO_K$ as given in \S\ref{S:CM}.
Recall that $\psi$ denotes the Hecke character over $K$ with conductor $\ff$ attached to $E$.

We review a special case of a result of Hida from \cite{hida2} that will play a crucial role in our proof of Theorem~\ref{thmA}.

\begin{Defi}
\label{imprimitive L function}
Let $\fh$ be any integral ideal of $K$ and let $\epsilon$ be any Hecke character of $K$.
The $\fh$-\emph{imprimitive $L$-function} of $\epsilon$ is defined as follows
\begin{align*}
L_{\fh}(\epsilon,s) &= \prod_{\gcd(\nu, \fh)=1} \left( 1- \frac{\epsilon(\nu)}{(N\nu)^s}\right)^{-1}\\ &=\sum_{\gcd(\fa, \fh)=1} \frac{\epsilon(\fa)}{(N\fa)^s},
\end{align*}
where the product runs over \emph{prime ideals} $\nu$ of $K$ coprime to $\fh$, and sum is taken over \emph{integral ideals} $\fa$ coprime to $\fh$.
\end{Defi}

Fix an integral ideal $\fg$ of $K$ which is divisible by $\ff$, relatively prime to $pq$, and such that only split primes of $K$ divide $\fg$.
Let $F = \sR(\fg q)$ be the \emph{ray class field} of $K$ of conductor $\fg q$ and write $\Delta =\Gal(F/K)$.
Set $F_\infty=\bigcup_{n\ge1} \sR(\fg q^n)$; this is a $\ZZ_q^2$-extension of $F$.
 We fix an isomorphism 
\[
\Gal(F_\infty/K) \simeq \Gal(F/K) \times \Gal(K_\infty/K) = \Delta \times \Zq^2.
\]

Let $\epsilon$ be a character of $\Gal(F_\infty/K)$.
For our purpose, $\epsilon$ will be of the form $ \overline{\varphi \psi^{k}}$, where $\varphi$ is a finite-order character and $k$ is an integer between $1$ and $p-1$.
Denote by $L(\epsilon,s)$ the \emph{primitive Hecke $L$-function} of $\epsilon$.
Recall that the imprimitive (or partial) $L$-function differs from the primitive (or classical) $L$-function by a finite number of Euler factors.
Let $N_{K/\QQ}$ denote the norm map.
We can further define the \emph{primitive algebraic Hecke $L$-value},
\[
L^{\alg}(\overline{\varphi\psi^k})=L^{\alg}({\epsilon}) := \frac{L\left({\epsilon}, k\right)}{\Omega_{\infty}^{k} }= \frac{L\left(\overline{\varphi\psi^k}N_{K/\QQ}^{-k},0 \right)}{\Omega_{\infty}^{k} }.
\]
Here, $\Omega_\infty$ denotes a complex period for $E_{/\CC}$.
Similarly, given an integral ideal $\fh$ of $K$, we define the $\fh$-\emph{imprimitive algebraic Hecke $L$-value},
\[
L_\fh^{\alg}(\overline{\varphi\psi^k})=L_{\fh}^{\alg}({\epsilon}) := \frac{L_\fh\left({\epsilon}, k\right)}{\Omega_{\infty}^{k} }= \frac{L_\fh\left(\overline{\varphi\psi^k}N_{K/\QQ}^{-k},0 \right)}{\Omega_{\infty}^{k} }.
\]
Note that $L$ and $L_{\fh}$ differ by the omission of the Euler factors at primes dividing
$\fh$.

In what follows, we say that a Hecke character $\epsilon$ of $K$ is of \emph{infinity type $(a,b)$} if its infinity component sends $x$ to $x^a\overline{x}^b$.
Under this convention, $\psi$ has infinity type $(-1,0)$, whereas the norm map $N_{K/\QQ}$ is of infinity type $(-1,-1)$.
Thus, the Hecke character $\overline{\psi^k} N_{K/\QQ}^{-k}$ is of infinity type $(k,0)$.

Henceforth, we fix a prime $v\mid\fp$ of $F$ and an embedding $\overline{\QQ}\subset \overline{\Qp}$ so that $v$ is sent into the maximal ideal $\MQp$ of $\cO_{\overline{\Qp}}$.
This allows us to consider $L_\fh^{\alg}(\overline{\varphi\psi^k})$ as elements of $\overline{\Qp}$.
Throughout, $\pi$ is a fixed uniformizer of $F_v$ and we write $\ord_\pi$ for the valuation on $\overline{\Qp}$ normalized so that $\ord_\pi(\pi)=1$.

\begin{Th}[Hida]
\label{thm:hida}
For all but finitely many characters $\varphi$ that factor through $\sR(\fg q^\infty)^\ac$, we have
\[
\ord_\pi\left(L_{(q)}^{\alg}(\overline{\varphi\psi^k})\right)=0.
\]
\end{Th}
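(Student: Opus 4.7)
The plan is to follow Hida's approach: express the $L$-values as toric sums of an Eisenstein series evaluated at CM points, reduce modulo $\pi$, and then appeal to a Zariski density theorem for anticyclotomic CM orbits in the mod-$p$ Igusa tower to conclude non-vanishing for all but finitely many $\varphi$.

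The first step is the integral representation. By Damerell's formula in the form developed by Katz and de Shalit, for $n$ large enough that $\varphi$ factors through $\Gal(\sR(\fg q^n)^{\ac}/K)$, one writes
\[
L_{(q)}^{\alg}\bigl(\overline{\varphi\psi^k}\bigr)=\sum_{\sigma\in\Gal(\sR(\fg q^n)^{\ac}/K)}\varphi(\sigma)\cdot E_k(\sigma\cdot x_0),
\]
where $x_0$ is the CM point on the Shimura variety for $\GL_2/K$ attached to $\psi$ and $E_k$ is a fixed weight-$k$ Eisenstein series whose $q$-expansion at a distinguished cusp has $\pi$-integral coefficients. Under our hypotheses --- $p$ and $q$ split in $K$, $\fg$ divisible only by split primes of $K$, and the Euler factor at $q$ deleted --- every summand $E_k(\sigma\cdot x_0)$ is a $\pi$-adic integer, so reduction modulo $\pi$ is well-defined. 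The theorem is thereby equivalent to showing that the reduced twisted sum $\sum_\sigma\varphi(\sigma)(E_k(\sigma\cdot x_0)\bmod\pi)$ is non-zero for all but finitely many $\varphi$.

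The main obstacle is this non-vanishing statement, which is the substance of Hida's theorem. Its proof combines two ingredients. First, the $q$-expansion principle at a Tate curve cusp shows $E_k\bmod\pi\ne 0$. Second, and crucially, a rigidity theorem of Chai on Hecke-stable subvarieties of mod-$p$ Igusa towers asserts that the anticyclotomic torus orbit $T^{\ac}\cdot\bar{x}_0$ of the mod-$p$ reduction of $x_0$ is Zariski dense in its ambient geometric component; the split-prime hypothesis on $\fg$ is precisely what makes this torus sufficiently generic to avoid landing in a proper Hecke-stable subvariety. Together these imply that the function $\sigma\mapsto E_k(\sigma\cdot x_0)\bmod\pi$ is non-zero in a sufficiently robust sense that its Fourier coefficients can vanish at only finitely many characters $\varphi$ --- the assumption $p\ne q$ enters here through the semisimplicity (Maschke) of the finite-level group rings $(\cO_{\overline{\Qp}}/\pi)[\Gal(\sR(\fg q^n)^{\ac}/K)]$, which controls how the mod-$\pi$ orthogonality condition propagates up the tower. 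Combining the three steps yields $\ord_\pi(L_{(q)}^{\alg}(\overline{\varphi\psi^k}))=0$ for all but finitely many $\varphi$, as required.
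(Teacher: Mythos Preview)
Your proposal and the paper's proof operate at entirely different levels. The paper does not reprove Hida's theorem; it simply observes that the present statement is a direct specialization of \cite[Theorem~4.3]{hida2}, and the ``proof'' consists of matching notation: take the CM field $M$ to be $K$, the CM type $\Sigma$ to be the one of infinity type $(1,0)$, $\kappa=0$, and then identify Hida's $\lambda$ with $\overline{\psi^k}N_{K/\QQ}^{-k}\phi'$ and his $\chi^{-1}$ with the anticyclotomic part $\eta$ of $\overline\varphi$. The only substantive check is that hypothesis (M1) of \emph{loc.\ cit.}\ fails (it would require $K/\QQ$ to be everywhere unramified), so the non-vanishing conclusion of Hida's theorem applies. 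That is the entire argument.

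What you have written is instead a sketch of Hida's own proof of his general theorem. The broad architecture you describe --- Damerell/Katz interpolation of $L$-values as twisted sums of an Eisenstein series over CM points, $\pi$-integrality, the $q$-expansion principle, and Chai's rigidity/Zariski-density input for the anticyclotomic orbit in the mod-$p$ Igusa tower --- is indeed the shape of Hida's argument. But as a standalone proof it has gaps that would take substantial work to fill: one must construct the specific Eisenstein measure, prove the integrality and the precise interpolation formula, and carry out the density argument carefully. There is also a slip: the relevant Shimura variety is for $\GL_2$ over the maximal totally real subfield of $K$, which here is $\QQ$, not for $\GL_2/K$ (the latter, for $K$ imaginary quadratic, is zero-dimensional and carries no useful geometry). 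Finally, your explanation of where the split-prime hypothesis on $\fg$ enters is heuristic; in Hida's argument it is needed to ensure that the relevant CM points lie in the ordinary locus and that the density statement for the anticyclotomic orbit goes through.

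In short: your outline is morally on target as a summary of \cite{hida2}, but for the purposes of this paper the intended proof is a one-paragraph citation, and you should replace your sketch with the verification that the hypotheses of \cite[Theorem~4.3]{hida2} are met in the present setting.
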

\begin{proof}
For each $\varphi$, we have $\overline{\varphi}=\phi\eta$, where $\phi$ is a character of $\Delta$ and $\eta$ is a character of the Galois group $\Gal(\sR(\fg q^\infty)^\ac/F)$.
We may further decompose $\phi$ into $\phi'\nu^{-1}$, where $\nu$ is a character of $\Gal(F/\sR(\fg))$ and $\phi'$ is a character of $\Gal(\sR(\fg)/K)$.
We have the field diagram:

\begin{equation*}
\begin{tikzpicture}[node distance = 2cm, auto]
\node(Q) {$\QQ$};
\node(K) [above of =Q, node distance =0.7cm]{$K$};
\node (N1) [above of=K, right of =K, node distance =0.5cm] {\null};
\node (L) [right of =K, above of =K, node distance =1cm]{$\sR(\fg)$};
\node (N2) [above of=L, left of =L, node distance =0.5cm] {\null};
\node (F) [above of=K] {$\sR(\fg q)=F$};
\node (N3) [above of=F, node distance =1.5cm] {\null};
\node (I) [above of=F, node distance =3cm] {$\sR(\fg q^\infty)^\ac$};
\node (k) [right of =N3, node distance =5cm] {$\eta$};
\node (p) [below of =k, node distance =2cm] {$\nu^{-1}$};
\node (t) [below of =k, node distance =3cm] {$\phi'$};
\draw[-] (Q) to node {} (K);
\draw[dashed] (N1) to node {} (t);
\draw[dashed] (N2) to node {} (p);
\draw[-] (K) to node {} (L);
\draw[-] (K) to node {} (F);
\draw[bend left] (K) to node {\tiny$\Delta$} (F);
\draw[-] (L) to node {} (F);
\draw[bend left] (F) to node {\tiny$\Zq$} (I);
\draw[-] (F) to node {} (I);
\draw[dashed] (N3) to node {} (k);
\end{tikzpicture}
\end{equation*}
We take the CM field $M$ in \cite{hida2} to be the imaginary quadratic field $K$.
We take the CM type $\Sigma$ there to be the one that corresponds to the infinity type $(1,0)$ and $\kappa=0$.
Then the infinity type of the character $\lambda$ in \emph{op.~cit.} becomes
\[
k\Sigma+0(1-c)=k(1,0)+(0,0)-(0,0)=(k,0).
\]
The condition (M1) in \cite[Theorem~4.3]{hida2} does not hold since $K/\QQ$ is not unramified everywhere (it ramifies at the primes dividing the discriminant of $K$, which is nontrivial).
Hence, we can apply the aforementioned theorem with $\lambda$ and $\chi^{-1}$ taken to be $\overline{\psi^k}N^{-k}\phi'$ and $\eta$, respectively.
\end{proof}

\begin{rem}[{\cite[proof of Theorem~3.1.9]{Lam-thesis}}]
\label{remark: to go from L to Lh}
Let $\fg$ be a fixed ideal as before.
Fix an ideal $\fh$ of $\cO_K$ which is coprime to $\fp$ and divisible by $\fg q$.
Recall that the $\fh$-imprimitive algebraic $L$-value of $\overline{\varphi\psi^k}$ is given by
\[
L_{\fh}^{\alg}(\overline{\varphi \psi^k}) = \frac{L_{\fh}(\overline{\varphi\psi^k},k)}{\Omega_{\infty}^k}.
\]
Then, for almost all characters of $\Gal\left(\sR(\fg q^\infty)^{\ac}/F\right)\cong \Zq$, we have that
\[
\ord_{\pi}\left(L_{(q)}^{\alg}(\overline{\varphi \psi^k}) \right) = \ord_{\pi}\left(L_{\fh}^{\alg}(\overline{\varphi \psi^k}) \right).
\]
This follows from the observation that for a given prime ideal $\fa$ of $K$ that is coprime to $q$, for almost all characters $\eta$,
\[
\ord_{\pi}\left(1 - \frac{\overline{\varphi\psi^k}(\fa)}{(N\fa)^k} \right) = 0
\]
since $\eta$ sends $\fa$ to a $q$-power root of unity, and the images of $q$-power roots of unity under the reduction map on $\cO_{\overline{\Qp}}$ modulo $\MQp$ are distinct.
\end{rem}

\section{Consequences on class groups}
We now use Theorem~\ref{thm:hida} to study the growth of the $p$-part of the class group in an anticyclotomic $\Zq$-extension.
Let us introduce the necessary notation.
Throughout, $p\nmid 6 q$ is a fixed prime that is split in $K$ and $E_{/K}$ is a fixed CM elliptic curve as in the previous section (with Hecke character $\psi$ whose conductor is $\ff$). 
Let $K_0$ be any finite abelian extension of $K$ such that $p$ is unramified in $K_0$ and $p\nmid [K_0:K]$ (in the next subsection, we will let $K_0$ vary inside the anticyclotomic tower $\sR(\fg q^\infty)^\ac$).
Fix a prime $\fp$ of $K$ lying above $p$.
Set $L=K_0(E_\fp)$ and $L_\infty=L(E_{\fp^\infty})$.
Let $\Delta=\Gal(L/K)$ and $\Gamma=\Gal(L_\infty/L)\simeq \Zp$.
Let $\cG=\Gal(L_\infty/K)\cong \Delta\times\Gamma$ and $\Lambda=\Zp \llbracket \cG\rrbracket$.

Following \cite{Rub91}, we write $\overline\cC(L_\infty)$ (resp. $U(L_\infty)$) for the inverse limits over all finite sub-extensions inside $L_\infty$ of the completion of the elliptic units (resp. local principal units) at $\fp$.

Fix an ideal $\fh$ of $\cO_K$ which is coprime to $\fp$, is divisible by $\ff$, and is such that $K_0\subset K(E_{\fh})=\sR(\fh)$.
Let $\mu_K$ be the group of roots of unity of $K$ and $\lambda\in \cO_K\setminus\mu_K$ such that $\lambda\equiv 1\mod \fh$ with $(\lambda,6\fh\fp)=1$.
We let $\sigma_{(\lambda)}\in\Gal(K_0/K)$ denote the Artin symbol associated to $\lambda$.

We further decompose $\Delta$ as $H\times I$, where $H=\Gal(K_0/K)$ and $I=\Gal(K_0(E_\fp)/K_0)$.
Here, $I$ is the inertia subgroup at $\fp$ inside $\Delta$.
Let $\theta_\fp$ denote the canonical character given by the Galois action on $E_{\fp^\infty}$ restricted to $I$.
Given a character $\chi$ of $\Delta$, we write it as $\varphi\theta_\fp^k$, where $\varphi$ is a character of $H$ and $1\le k\le p-1$.
We have the following diagram:

\begin{equation*}
\begin{tikzpicture}[node distance = 2cm, auto]
\node(Q) {$\QQ$};
\node(K) [above of =Q, node distance =0.7cm]{$K$};
\node (N1) [above of=K, right of =K, node distance =0.5cm] {\null};
\node (L) [right of =K, above of =K, node distance =1cm]{$K_0$};
\node (N2) [above of=L, left of =L, node distance =0.5cm] {\null};
\node (F) [above of=K] {$L=K_0(E_\fp)$};
\node (N3) [above of=F, node distance =1.5cm] {\null};
\node (I) [above of=F, node distance =3cm] {$L_\infty=L(E_{\fp^\infty})$};
\node (p) [below of =k, node distance =2cm] {$\theta_\fp^k$};
\node (t) [below of =k, node distance =3cm] {$\varphi$};
\draw[-] (Q) to node {} (K);
\draw[dashed] (N1) to node {} (t);
\draw[dashed] (N2) to node {} (p);
\draw[-] (K) to node {} (L);
\draw[-] (K) to node {} (F);
\draw[bend left] (K) to node {\tiny$\Delta$} (F);
\draw[-] (L) to node {} (F);
\draw[bend left] (F) to node {\tiny$\Zp$} (I);
\draw[-] (F) to node {} (I);
\end{tikzpicture}
\end{equation*}

Before proceeding, we need to introduce the notion of an anomalous prime.

\begin{Defi}
Fix a prime $p$ and a number field $\mathcal{K}$.
Let $v$ be a prime above $p$ in $\mathcal{K}$ and write $\kappa_v$ to denote the corresponding residue field.
In the sense of Mazur (see \cite[p.~186]{Maz72}), $E$ is \emph{anomalous} at $v$ if $\widetilde{E}(\kappa_v)[p]\neq 0$.
\end{Defi}

Let $w$ be a prime in $L_\infty$ which lies above $v$.
Denote by $\cZ$ the decomposition subgroup at $\fp$ inside $\cG$.
Since $\gcd(p,\abs{\Delta}) =1$, the action of $\Delta\cap \mathcal{Z}$ on $\mu_{p^\infty}(L_{\infty,w}) = \mu_{p^M}$ gives a $\Zp$-valued character which we denote by $\chi_{\cyc}: \Delta\cap \mathcal{Z}\rightarrow \mu_{p-1}\subseteq \Zp^\times$.

We now record a theorem of Lamplugh which will be important for our discussion.

\begin{Th}
\label{th:L-values-trivial}
Let $\chi=\varphi\theta_\fp^k$ be a character of $\Delta$.
When $E/K_0$ is anomalous at a prime above $\fp$, we assume that $\chi|_{\Delta\cap \cZ}$ is not the cyclotomic character.
Let $\fh$ and $\lambda$ be as above.
If
\[
\ord_\pi\left( \left(N\left(\lambda\right)-\lambda^k\varphi(\sigma_{\left(\lambda\right)})\right) \cdot L_{\fh}^{\alg}(\overline{\varphi\psi^k})\right)=0,
\]
then $\overline{\cC}(L_\infty)^\chi=U(L_\infty)^\chi$.
Here, $M^\chi$ denotes the $\chi$-isotypic component of a $\Lambda$-module $M$.
\end{Th}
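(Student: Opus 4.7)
The plan is to compare $\overline{\cC}(L_\infty)^\chi$ and $U(L_\infty)^\chi$ through an explicit elliptic unit and its image under the Coleman--Coates--Wiles logarithmic derivative, deducing equality by Nakayama's lemma once the hypothesis forces that image to be a $\pi$-adic unit.

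First I would fix an elliptic unit $\xi \in \overline{\cC}(L_\infty)$ built from the $\lambda$-twisted Robert units of conductor $\fh$. Standard computations in elliptic unit theory (following Robert, Coates--Wiles, and de~Shalit) show that the $\chi$-projection $\xi^\chi$ generates $\overline{\cC}(L_\infty)^\chi$ as a module over the relevant $\chi$-component of the Iwasawa algebra $\Lambda$. The factor $N(\lambda)-\lambda^k\varphi(\sigma_{(\lambda)})$ arises precisely as the multiplier relating $\xi^\chi$ to a canonical generator, via the action of the Artin symbol $\sigma_{(\lambda)}$ on $E[\fh]$; this is the standard "twist correction" used to cancel the auxiliary parameter $\lambda$ when passing to $L$-values.

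Second, I would apply the $k$-th Coates--Wiles logarithmic derivative, which is a $\cG$-equivariant map out of $U(L_\infty)$ into an appropriate twist of $\Lambda$. The image of $\xi^\chi$ under this map is identified, via the Kronecker limit formula in its Iwasawa-theoretic incarnation, with a $\pi$-adic unit multiple of $(N(\lambda)-\lambda^k\varphi(\sigma_{(\lambda)})) \cdot L_{\fh}^{\alg}(\overline{\varphi\psi^k})$. The hypothesis then ensures that this image is a $\pi$-adic unit. Since the Coates--Wiles map identifies $U(L_\infty)^\chi$ with a free rank-one $\Zp\llbracket\Gamma\rrbracket$-module (suitably twisted), the image of $\xi^\chi$ must generate this module, and Nakayama's lemma yields $\overline{\cC}(L_\infty)^\chi = U(L_\infty)^\chi$.

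The step I expect to be the main obstacle, and the reason for the anomalous-prime hypothesis, is controlling the local structure of $U(L_\infty)^\chi$ at primes above $\fp$. When $E/K_0$ is anomalous at such a prime, the module of local units carries an extra $\mu_{p^\infty}$-summand on which the decomposition subgroup $\Delta \cap \cZ$ acts through $\chi_{\cyc}$; if $\chi|_{\Delta \cap \cZ}=\chi_{\cyc}$, then $U(L_\infty)^\chi$ would pick up an additional rank and the image of $\xi^\chi$ could not generate it, regardless of its valuation. Excluding this coincidence guarantees the free rank-one structure needed for the Nakayama step to conclude.
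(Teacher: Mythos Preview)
The paper does not actually prove this statement: its entire proof is the single line ``See \cite[Theorem~7.7]{Lam15}.'' Your proposal is a reasonable outline of precisely the argument carried out in Lamplugh's paper (elliptic units of conductor $\fh$ twisted by $\lambda$, the Coates--Wiles logarithmic derivative identifying their image with the algebraic $L$-value via the limit formula, freeness of $U(L_\infty)^\chi$ of rank one, and Nakayama), so there is no real discrepancy to discuss---you have sketched the cited result rather than offered an alternative.

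One small correction on the anomalous hypothesis: the extra summand in the local units when $E/K_0$ is anomalous is not quite a $\mu_{p^\infty}$ acquiring an additional $\Lambda$-rank; rather, anomaly makes the Coates--Wiles map on $U(L_\infty)^\chi$ fail to be an isomorphism onto its target precisely on the $\chi_{\cyc}$-component (there is a nontrivial kernel or cokernel coming from the $p$-torsion of the formal group over the residue field), which obstructs the Nakayama step. Excluding $\chi|_{\Delta\cap\cZ}=\chi_{\cyc}$ restores the identification of $U(L_\infty)^\chi$ with a free rank-one module via that map, exactly as you need.
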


\begin{proof}
See \cite[Theorem~7.7]{Lam15}.
\end{proof}

\subsection{Variations of class groups}
Let $F= \sR(\fg q)$ for some ideal $\fg$ of $\cO_K$ such that $\fg$ is divisible by $\ff$, is a product of primes that split in $K$, $p$ is unramified in $F/K$, $p\nmid [F:K]$, and is coprime to $\fp q$.
Furthermore, we assume that both $\fq$ and $\overline\fq$ are tamely ramified in $F$.
Then $\sR(\fg q^\infty)^{\ac}$ is a $\Zq$-extension of $F$, and for integers $n\ge0$, let $F_{n}/F$ be the $n$-th layer of this $\Zq$-extension.
Note that only primes above $q$ ramify in $F_{n}/F$, $p\nmid [F_{n}:K]$ (since $q\ne p$), and $F_n \subseteq \sR(\fg q^{n+1})$.
Therefore, we may take $K_0$ and $\fh$ in the previous section to be $F_n$ and $\fg q^{n+1}$, respectively.

For $n\ge1$, just as before we define $L_{n}=F_{n}(E_\fp)$, $L_{n,\infty}=F_{n}(E_{\fp^\infty})$, $\Delta_{n}=H_{n}\times I$, $\cG_{n}=\Delta_{n}\times \Gamma$, $U_{n,\infty}=U(L_{n,\infty})$, etc.
As mentioned previously, we now let $K_0$ vary inside the anticyclotomic tower $\sR(\fg q^\infty)^\ac$.
Note that $I=\Gal(K_0(E_\p)/K_0)\cong \Gal(L_n/F_n)$.
Define $X_{n,\infty}$ to be the Galois group of the maximal abelian $p$-extension of $L_{n,\infty}$ which is unramified outside $\fp$.
By global class field theory we have the following four-term exact sequence
\begin{equation}
\label{eqn: four term}
0 \rightarrow \overline{\mathcal{E}}_{n,\infty}/\overline{\mathcal{C}}_{n,\infty} \rightarrow U_{n,\infty}/\overline{\mathcal{C}}_{n,\infty} \rightarrow X_{n,\infty} \rightarrow A_{n,\infty} \rightarrow 0.
\end{equation}
Here, $\overline{\mathcal{E}}_{n,\infty}= \overline{\mathcal{E}}(L_{n,\infty})$ is used to denote the global units of $L_{n,\infty}$.
Finally, $A_{n,\infty} = A(L_{n,\infty})$ is the inverse limit of the $p$-part of the class group for each finite extension of $F_n$ contained inside $L_{n,\infty}$.
In other words, $A_{n,\infty}$ can be identified with the Galois group of the maximal abelian unramified $p$-extension of $L_{n,\infty}$.
We now prove the key result which will be required in proving Theorem~\ref{thmA}.

\begin{Th}
\label{key result for class group}
There exists an integer $N\ge0$ such that $X_{n,\infty}^I=X_{N,\infty}^I$ for all $n\ge N$, where $M^I$ denotes the subgroup of $M$ fixed by $I$.
\end{Th}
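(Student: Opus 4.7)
\emph{Proof plan.} The idea is to apply the functor $(\cdot)^I$ to the four-term exact sequence (\ref{eqn: four term}) and study each term via isotypic decomposition. Since $|I|$ divides $p-1$, $\gcd(|I|, p) = 1$, so $(\cdot)^I$ is exact, yielding
\[
0 \to (\overline{\cE}_{n,\infty}/\overline{\cC}_{n,\infty})^I \to (U_{n,\infty}/\overline{\cC}_{n,\infty})^I \to X_{n,\infty}^I \to A_{n,\infty}^I \to 0.
\]
Because $p \nmid |\Delta_n|$, the $I$-fixed module $(U_{n,\infty}/\overline{\cC}_{n,\infty})^I$ further decomposes into $\chi$-isotypic components over characters $\chi$ of $\Delta_n$ trivial on $I$. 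Any such $\chi$ can be written as $\varphi\, \theta_\fp^{|I|}$ for a unique character $\varphi$ of $H_n$; the key observation is that $k := |I|$ lies in $\{1,\ldots,p-1\}$, placing us in the range where Hida's theorem applies.

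Writing $\varphi = \phi\eta$ with $\phi$ a character of the fixed finite group $\Gal(F/K)$ and $\eta$ a character of the anticyclotomic tower $\Gal(\sR(\fg q^\infty)^\ac/F)$, Theorem~\ref{thm:hida} together with Remark~\ref{remark: to go from L to Lh} imply that for each of the finitely many $\phi$, all but finitely many $\eta$ satisfy $\ord_\pi(L_\fh^{\alg}(\overline{\varphi\psi^{|I|}})) = 0$. For such $\chi = \varphi\,\theta_\fp^{|I|}$, Theorem~\ref{th:L-values-trivial} gives $(U_{n,\infty}/\overline{\cC}_{n,\infty})^\chi = 0$; one must also verify the non-anomalous hypothesis, which we may arrange by exploiting the freedom of choice in $E$ from \S\ref{S:CM}. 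Collecting the finitely many exceptions into a fixed ``bad'' set $S$ of characters, independent of $n$, and choosing $N$ so that every $\chi \in S$ factors through $\Delta_N$, we conclude that for all $n \geq N$,
\[
(U_{n,\infty}/\overline{\cC}_{n,\infty})^I = \bigoplus_{\chi \in S} (U_{n,\infty}/\overline{\cC}_{n,\infty})^\chi
\]
is supported on a fixed finite family of characters.

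To upgrade this to stabilization of $X_{n,\infty}^I$, I invoke Rubin's Iwasawa main conjecture \cite{Rub91}: for $\chi \notin S$, the vanishing of $(U_{n,\infty}/\overline{\cC}_{n,\infty})^\chi$ forces the characteristic ideal of $X_{n,\infty}^\chi$ to be trivial, and Rubin's theorem on the absence of non-zero finite $\Lambda$-submodules in $X_{n,\infty}$ then yields $X_{n,\infty}^\chi = 0$. Consequently $X_{n,\infty}^I = \bigoplus_{\chi \in S} X_{n,\infty}^\chi$, which stabilizes for $n \geq N$. The main obstacle, beyond careful book-keeping across the isotypic decomposition, is the passage from $(U/\overline{\cC})^\chi = 0$ to $X^\chi = 0$, which genuinely requires both the main conjecture and a no-finite-submodule result; one must also check that the modules $X_{n,\infty}^\chi$ for $\chi \in S$, living over growing Iwasawa algebras, are canonically compatible across levels via the appropriate norm/restriction maps.
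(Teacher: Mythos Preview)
Your overall strategy matches the paper's: decompose into $\chi$-isotypic components with $\chi|_I=1$, use Hida's theorem to control the $L$-values, feed this into Lamplugh's Theorem~\ref{th:L-values-trivial} to kill $(U_{n,\infty}/\overline{\cC}_{n,\infty})^\chi$ for all but finitely many $\chi$, and then invoke the main conjecture together with a no-finite-submodule result to pass to $X_{n,\infty}^I$. Two points, however, need correction.

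First, the hypothesis of Theorem~\ref{th:L-values-trivial} is that
\[
\ord_\pi\Bigl(\bigl(N(\lambda)-\lambda^k\varphi(\sigma_{(\lambda)})\bigr)\cdot L_\fh^{\alg}(\overline{\varphi\psi^k})\Bigr)=0,
\]
not merely $\ord_\pi\bigl(L_\fh^{\alg}(\overline{\varphi\psi^k})\bigr)=0$. You never address the first factor. The paper disposes of it by the same mechanism as Remark~\ref{remark: to go from L to Lh}: writing $\varphi=\phi_0\eta$, one has $N(\lambda)-\lambda^k\varphi(\sigma_{(\lambda)})\equiv 0\pmod{\pi}$ iff $\eta(\sigma_{(\lambda)})$ hits a fixed residue class, and since distinct $q$-power roots of unity remain distinct modulo $\pi$, this fails for all but finitely many $\eta$. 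Without this step your appeal to Theorem~\ref{th:L-values-trivial} is incomplete.

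Second, your treatment of the anomalous hypothesis is off. You cannot ``arrange'' non-anomalousness by changing $E$: the curve was fixed in \S\ref{S:CM} for reasons unrelated to $\fp$, and anomalousness at $\fp$ is not among the properties controlled there. The paper's observation is cleaner and correct: since $\chi|_I=1$ while the cyclotomic character is nontrivial on the inertia subgroup $I\subset\Delta\cap\cZ$ (as $\mu_p\not\subset K_0$), one has $\chi|_{\Delta\cap\cZ}\ne\chi_{\cyc}$ automatically, so the hypothesis of Theorem~\ref{th:L-values-trivial} is vacuously satisfied for every $\chi$ under consideration.

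Finally, your endgame differs slightly from the paper's and leaves more to check. You show $X_{n,\infty}^\chi=0$ for $\chi\notin S$ and then need $X_{n,\infty}^\chi\cong X_{N,\infty}^\chi$ for $\chi\in S$, which you correctly flag as unresolved. The paper bypasses this by working with $X_{n,\infty}^I$ globally: from $(U_{n,\infty}/\overline{\cC}_{n,\infty})^I=(U_{N,\infty}/\overline{\cC}_{N,\infty})^I$ and the main conjecture one gets $\charac_\Lambda(X_{n,\infty}^I)=\charac_\Lambda(X_{N,\infty}^I)$; the restriction map $X_{n,\infty}^I\twoheadrightarrow X_{N,\infty}^I$ then has finite kernel, which Greenberg's theorem \cite{Gre78} (rather than a result of Rubin) forces to be zero. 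This is more direct than tracking each $\chi\in S$ across levels.
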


\begin{proof}
Let $n\ge0$ be an integer and consider a character $\eta$ of $\Gal(F_n/F)\cong \ZZ/q^n$.
Let $\phi_0$ be a character of $\Gal(F/K)$ and $k$ an integer that is a multiple of $p-1$ so that $\theta_\fp^{k}$ is the trivial character.
Set $\varphi = \eta\phi_0$.
We draw the following field diagram for the convenience of the reader.
\begin{equation*}
\begin{tikzpicture}[node distance = 2cm, auto]
\node(Q) {$\QQ$};
\node(K) [above of =Q, node distance =0.7cm]{$K$};
\node (F) [above of=K, left of=K, node distance=0.8cm] {$F$};
\node (L) [above of=K, node distance =1.8cm] {$L$};
\node (N1) [above of=K, left of =K, node distance =0.5cm] {\null};
\node (p0) [left of=N1, node distance =5.5cm] {$\phi_0$};
\node (N3) [left of=L, node distance =1.8cm] {\null};
\node (N4) [right of=L, node distance =1cm] {\null};
\node (Fn) [above of=F, node distance =2cm] {$F_n$};
\node (N2) [above of=Fn, node distance =0.6cm] {\null};
\node (Ln) [above of=L, node distance =2cm] {$L_n$};
\node (e) [left of =N3, node distance =4.2cm] {$\eta$};
\node (t) [left of =N2, node distance =5.1cm] {$\theta_{\fp}^k=1$};
\node (vp) [right of =L, node distance =5cm] {$\chi = \phi_0\eta\theta_{\fp}^k = \varphi\theta_{\fp}^k$};
\draw[-] (Q) to node {} (K);
\draw[-] (L) to node {} (F);
\draw[-] (L) to node {} (K);
\draw[-] (Fn) to node {} (F);
\draw[-] (Fn) to node {} (Ln);
\draw[-] (L) to node {} (Ln);
\draw[-] (K) to node {} (F);
\draw[bend right] (K) to node [swap]{\tiny$\Delta_n$} (Ln);
\draw[bend right] (Ln) to node [swap]{\tiny$I$} (Fn);
\draw[bend left] (F) to node {\tiny$\ZZ/q^n$} (Fn);
\draw[dashed] (N1) to node {} (p0);
\draw[dashed] (N2) to node {} (t);
\draw[dashed] (N3) to node {} (e);
\draw[dashed] (N4) to node {} (vp);
\end{tikzpicture}
\end{equation*}

Let $\cO$ denote the ring of integers of the unique unramified $\Zq$-extension of $F_v$.
In other words, $\cO = \cO_{F_v(\mu_{q^{\infty}})}$.
Let $\lambda\in \cO_K\setminus \mu_K$ such that $\lambda\equiv 1\mod \fh$ and $(\lambda,6\fh\fp)=1$ (where $\fh=\fg q^{n+1}$). 
We have
\begin{align*} \left(\lambda\overline{\lambda}-\lambda^k\varphi(\sigma_{\left(\lambda\right)})\right) \equiv 0 \pmod{\pi \cO} &\Leftrightarrow \varphi(\sigma_{(\lambda)}) \equiv \overline{\lambda}\lambda^{1-k} \pmod{\pi \cO}\\
&\Leftrightarrow \eta\phi_0(\sigma_{(\lambda)}) \equiv \overline{\lambda}\lambda^{1-k} \pmod{\pi \cO}\\
&\Leftrightarrow \eta(\sigma_{(\lambda)}) \equiv \overline{\lambda}\lambda^{1-k}\phi_0^{-1}(\sigma_{(\lambda)}) \pmod{\pi \cO}.
\end{align*}
Note that $\eta$ has exact order $q^m$ for some $m\geq 1$.
Therefore, $\eta(\sigma_{(\lambda)})$ is a primitive $q^m$-th root of unity.
But in $\cO/\pi\cO$, the $q$-power roots of unity are distinct.
Therefore, by the same argument outlined in Remark~\ref{remark: to go from L to Lh}, there exists an integer $N_1$ such that for all characters $\eta$ of $\Gal(F_n/F)$  which do not factor through $\Delta_{N_1}$ (with $n\ge N_1$),
\begin{equation}
\ord_{\pi}\left(N_{K/\QQ}\left(\lambda\right)-\lambda^k\varphi(\sigma_{\left(\lambda\right)})\right) =0.
\label{eq:Euler-factor}    
\end{equation}

By Theorem~\ref{thm:hida} and Remark~\ref{remark: to go from L to Lh}, one can choose a sufficiently large $N_2$ such that
\begin{equation}
\label{eq:ord-L} \ord_{\pi}\left(L_{\fh}^{\alg}(\overline{\varphi\psi^k}) \right)=0   
\end{equation}
 for all characters $\varphi$ of $\Delta_n$ which do not factor through $\Delta_{N_2}$ (with $n\ge N_2$).

Set $N=\max(N_1,N_2)$. If  $\chi=\varphi$ is a character of $\Delta_n$ which does not factor through $\Delta_N$ (with $n\ge N$), then  \eqref{eq:Euler-factor} and \eqref{eq:ord-L} imply that
\[
\ord_\pi\left( \left(N_{K/\QQ}\left(\lambda\right)-\lambda^k\varphi(\sigma_{\left(\lambda\right)})\right) \cdot L_{\fh}^{\alg}(\overline{\varphi\psi^k})\right)=0.
\]
Take $K_0$ in Theorem~\ref{th:L-values-trivial} to be $F_n$. Since the restriction of the character $\varphi$ to $I$ is trivial, the hypothesis regarding $E/K_0$ when a prime above $\fp$ is an anomalous prime always holds.
Therefore, we deduce that
\[
U_{n,\infty}^\chi=\overline{\cC}_{n,\infty}^\chi
\]
for all characters $\chi$ of $\Delta_{n}$ that do not factor through $\Delta_N$ with $\chi|_I=1$.
This implies
\[
U_{n,\infty}^I/\overline{\cC}_{n,\infty}^I=U_{N,\infty}^I/\overline{\cC}_{N,\infty}^I.
\]
Next, via the main conjecture of Iwasawa theory for imaginary quadratic fields (see \cite[Theorem~4.1(i)]{Rub91}) we can conclude that there exists an integer $N\geq 0$ such that
\[
\charac_{\Lambda}(X_{n,\infty}^I) = \charac_{\Lambda}(X_{N,\infty}^I)
\]
for all $n\geq N$.
Now, consider the restriction map
\[
\pi_{n,N}: X_{n,\infty}^I \twoheadrightarrow X_{N,\infty}^I.
\]
Since characteristic ideals are multiplicative in short exact sequences, the kernel of the above surjective map must be finite. 
However, a theorem of R.~Greenberg (see \cite[Theorem \S1]{Gre78}) ensures that there are no non-trivial finite submodules inside $X_{n,\infty}^I$.
This forces the kernel to be trivial, i.e.,
\[
X_{n,\infty}^I = X_{N,\infty}^I.
\]
The proof of the theorem is now complete.
\end{proof}

We can now state and prove the auxiliary result that will allow us to conclude Theorem~\ref{thmA}.
Our proof follows the proof of \cite[Theorem~7.10]{Lam15} very closely.
We repeat the statement below for the convenience of the reader.

\begin{Th}
\label{auxiliary}
Let $K$ be an imaginary quadratic field of class number 1.
Let $p$ and $q$ be distinct primes ($\geq 5$) which split in $K$.
Let $\fg$ be a fixed ideal of $\cO_K$ coprime to $pq$ such that $\fg$ is a product of split primes and is divisible by the conductor of an elliptic curve over $K$ with CM by $\cO_K$.
Let $F= \sR(\fr q)$.
We assume that $p\nmid[F:K]$.
Let $\sR(\fr q^\infty)^{\ac}/F$ denote the anticyclotomic $\Zq$-extension and write $F_n$ for the unique subextension of $\sR(\fr q^\infty)^{\ac}/F$ whose degree is $q^n$.
Then, there exists an integer $N$ such that for all $n\ge N$,
\[\ord_p(h(F_n))=\ord_p(h(F_N)).\]
\end{Th}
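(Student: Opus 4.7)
The plan is to translate the stabilization of $X_{n,\infty}^I$ provided by Theorem~\ref{key result for class group} into the stabilization of the $p$-adic valuation of $h(F_n)$, following closely the strategy of \cite[Theorem~7.10]{Lam15}. First I would apply Theorem~\ref{key result for class group} to fix an integer $N$ with $X_{n,\infty}^I = X_{N,\infty}^I$ for all $n\ge N$.

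Second, using the standard Iwasawa-theoretic control theorem for the $\Zp$-extension $L_{n,\infty}/L_n$, together with Greenberg's theorem on the absence of nontrivial finite $\Lambda$-submodules of $X_{n,\infty}$ (already invoked in the proof of Theorem~\ref{key result for class group}), I would descend the equality $X_{n,\infty}^I = X_{N,\infty}^I$ to an equality of the corresponding finite-level modules $X(L_n)^I = X(L_N)^I$. Here $X(L_n)$ denotes the Galois group of the maximal abelian $p$-extension of $L_n$ unramified outside $\fp$. The key point is that the equality already holds at the top of the tower, so that no characteristic-ideal argument is required at this stage; only exactness of $\Gamma$-coinvariants on a module without nontrivial finite submodules is needed.

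Third, I would descend from $L_n$ down to $F_n$. Since $|I| = p-1$ is coprime to $p$, taking $I$-invariants is exact on $p$-primary modules; and since $L_n = F_n(E_\fp)$, this identifies $X(L_n)^I$ with the Galois group $Y(F_n)$ of the maximal abelian $p$-extension of $F_n$ unramified outside primes above $\fp$. Consequently $Y(F_n) = Y(F_N)$ for all $n\ge N$. The $p$-part $A(F_n)$ of the class group of $F_n$ differs from $Y(F_n)$ only by local contributions at the (finitely many) primes of $F_n$ above $\fp$, which are finitely decomposed in the anticyclotomic $\Zq$-tower (because $\fp\nmid q$) and whose inertia contributions are controlled by standard local computations. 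Comparing these local terms at level $n$ and at level $N$ then yields $\ord_p(h(F_n)) = \ord_p(h(F_N))$ for all sufficiently large $n$ (after possibly enlarging $N$).

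The main obstacle I expect is the careful bookkeeping in the last step: one must verify that the local terms attached to primes of $F_n$ above $\fp$ do not contribute unbounded growth along the anticyclotomic tower and do not obstruct the passage from $Y(F_n)$ to $A(F_n)$. Since the anticyclotomic $\Zq$-tower is everywhere unramified above $\fp$ and the number of primes above $\fp$ stabilizes, this bookkeeping is essentially the one carried out in \cite[Theorem~7.10]{Lam15}, and the arguments there transcribe with only notational changes once Theorem~\ref{key result for class group} is in hand.
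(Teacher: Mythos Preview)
Your plan takes a genuinely different route from the paper's, and in doing so introduces two steps that are harder than anything the paper actually needs.

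The paper does not descend $X_{n,\infty}^I$ along the $\Zp$-tower, nor does it pass through the intermediate object $Y(F_n)$. Instead, it uses the four-term exact sequence \eqref{eqn: four term} to pass from the equality $X_{n,\infty}^I=X_{N,\infty}^I$ to the surjectivity of the natural map $A_{N,\infty}^I\to A_{n,\infty}^I$ on inverse limits of $p$-class groups. It then observes that the restriction $A_{n,\infty}^I\twoheadrightarrow A(F_n)$ is surjective simply because $L_{n,\infty}/F_n$ is \emph{totally ramified} at the primes above $\fp$. A small commutative square then forces $A(F_N)\to A(F_n)$ to be surjective, and combined with the trivial injection (since $p\nmid[F_n:F_N]$) this is an isomorphism. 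No control theorem along the $\Zp$-direction and no local bookkeeping at $\fp$ is required.

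By contrast, your Step~2 asserts that $X_{n,\infty}^I=X_{N,\infty}^I$ descends to $X(L_n)^I=X(L_N)^I$ via ``exactness of $\Gamma$-coinvariants on a module without nontrivial finite submodules''. This is not what Greenberg's theorem gives you: the absence of finite submodules does not by itself identify $(X_{n,\infty})_\Gamma$ with $X(L_n)$, and the relevant control map has a kernel and cokernel governed by local terms that you have not addressed. Likewise, your Step~4 comparison between $Y(F_n)$ and $A(F_n)$ is a genuine issue: the local contributions at primes above $\fp$ are bounded, but you need stabilization, not just boundedness, and you would again have to invoke the injection $A(F_N)\hookrightarrow A(F_n)$ at the end to close the argument. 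Both detours are avoidable: the paper's use of $A_{n,\infty}^I$ and total ramification collapses your Steps~2--4 into a two-line diagram chase.
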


\begin{proof}
Let the $p$-class group of $F_n$ (resp. $F_N$) be denoted by $A(F_n)$ (resp. $A(F_N)$).
Since $p$ does not divide $[F_n : F_N]$, we have an injection
\begin{equation}
 \label{injection}
A(F_N) \hookrightarrow A(F_n).
\end{equation}
It follows from global class field theory that for all $n\geq 0$, we have the identification
\[
A_{n,\infty} \simeq \Gal(M_{n,\infty}/L_{n,\infty}),
\]
where $M_{n,\infty}$ is the maximal abelian unramified $p$-extension of $L_{n,\infty}$.
Consider the following diagram\[
\xymatrix{
 A_{N,\infty}^I\ar[r]\ar[d] & A_{n,\infty}^I\ar[d]\\ 
 A(F_N)\ar[r]& A(F_n)}
\]
where the vertical maps are given by restriction and are surjective because the extension $L_{n,\infty}/F_n$ and $L_{N,\infty}/F_N$ are totally ramified at primes above $\fp$.
Furthermore, the top horizontal map is surjective by Theorem~\ref{key result for class group} and the exact sequence \eqref{eqn: four term}.
Therefore, the bottom row is a surjective map as well.
When combined with \eqref{injection}, we see that the bottom row is in fact an isomorphism.
This completes the proof of the theorem.
\end{proof}

The following lemma allows us to complete the proof of Theorem~\ref{thmA} via Theorem~\ref{auxiliary}.

\begin{Lemma}
\label{lemma: LCM}
Let $\fa$ and $\fb$ be ideals of $\cO_K$.
If $p\nmid [\sR(\fa):K]\cdot [\sR(\fb):K]$, then $p\nmid [\sR(\lcm(\fa,\fb)):K]$.
\end{Lemma}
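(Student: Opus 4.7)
My plan is to reduce the statement to a $p$-adic valuation count via the ray-class-group presentation together with a Mayer--Vietoris type identity. Since $K$ is imaginary quadratic of class number one, every ideal $\fh$ of $\cO_K$ satisfies
\[
[\sR(\fh):K] = |\Cl_K(\fh)| = \frac{|(\cO_K/\fh)^\times|}{|\bar\mu_K(\fh)|},
\]
where $\bar\mu_K(\fh)$ denotes the image of $\mu_K = \cO_K^\times$ in $(\cO_K/\fh)^\times$.

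The key technical input will be a short exact sequence of unit groups. Setting $\fc = \lcm(\fa,\fb) = \fa \cap \fb$ and $\mathfrak{d} = \gcd(\fa,\fb) = \fa+\fb$, I would establish
\[
1 \to (\cO_K/\fc)^\times \to (\cO_K/\fa)^\times \times (\cO_K/\fb)^\times \xrightarrow{(x,y)\mapsto xy^{-1}} (\cO_K/\mathfrak{d})^\times \to 1.
\]
The ring-level analogue $0 \to \cO_K/\fc \to \cO_K/\fa \oplus \cO_K/\fb \to \cO_K/\mathfrak{d} \to 0$ (with maps $r\mapsto(r,r)$ and $(x,y)\mapsto x-y$) is standard. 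Passing to units, injectivity and exactness in the middle are formal; the one point requiring checking is surjectivity on the right, which I would verify prime-by-prime, using CRT on $\cO_K/\fa$ and the elementary surjectivity of $(\cO_K/\fp^{a_\fp})^\times \twoheadrightarrow (\cO_K/\fp^{d_\fp})^\times$ for each prime $\fp$ dividing $\mathfrak{d}$.

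Comparing orders in this sequence and eliminating the unit-group sizes in favour of class-group sizes via the displayed formula yields
\[
|\Cl_K(\fc)| \cdot |\bar\mu_K(\fc)| \cdot |(\cO_K/\mathfrak{d})^\times| = |\Cl_K(\fa)| \cdot |\Cl_K(\fb)| \cdot |\bar\mu_K(\fa)| \cdot |\bar\mu_K(\fb)|.
\]
In the context of Theorem~\ref{thmA} we have $p \geq 5$, so $p \nmid |\mu_K| \in \{2,4,6\}$ and hence $p \nmid |\bar\mu_K(\fh)|$ for every $\fh$. Applying $v_p$ to the identity above and using the hypothesis $v_p(|\Cl_K(\fa)|) = v_p(|\Cl_K(\fb)|) = 0$ gives
\[
v_p(|\Cl_K(\fc)|) + v_p\bigl(|(\cO_K/\mathfrak{d})^\times|\bigr) = 0.
\]
Since both summands are non-negative, each must vanish, which delivers $p \nmid [\sR(\lcm(\fa,\fb)):K]$.

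There is no genuine obstacle here: the argument is bookkeeping once the Mayer--Vietoris sequence of unit groups is in place. The only tacit input beyond the statement is the running assumption $p \geq 5$, which ensures that the roots-of-unity factors do not contribute at $p$ and hence drop out of the final valuation identity.
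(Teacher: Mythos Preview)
Your argument is correct. Both your proof and the paper's reduce, via the ray-class-group formula for class-number-one $K$, to a statement about $|(\cO_K/\fh)^\times|$, and both implicitly use $p\ge 5$ so that the $\mu_K$-contribution is prime to $p$. The paper, however, takes the shortest path from there: writing $\fa=\prod\fp_i^{m_i}$ and $\fb=\prod\fp_i^{n_i}$, the Chinese remainder theorem gives $|(\cO_K/\fh)^\times|=\prod_i|(\cO_K/\fp_i^{e_i})^\times|$; the hypothesis then forces $p\nmid|(\cO_K/\fp_i^{m_i})^\times|$ and $p\nmid|(\cO_K/\fp_i^{n_i})^\times|$ for every $i$, and since $\max(m_i,n_i)\in\{m_i,n_i\}$ the conclusion is immediate. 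Your Mayer--Vietoris sequence is a correct and structurally clean repackaging of exactly this CRT count---and it has the minor virtue of making the role of $\mu_K$ explicit---but it is more machinery than the problem requires; the paper's direct prime-by-prime comparison is a two-line argument.
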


\begin{proof}
Let us write $\fa=\prod\fp_i^{m_i}$, $\fb=\prod\fp_i^{n_i}$, where $\fp_i$ are distinct prime ideals of $\cO_K$.
Recall that $K$ is of class number 1. By the theory of complex multiplication, if $I$ is an ideal of $\cO_K$, we have 
\[
\Gal(\sR(I)/K)=\Gal(K(E[I])/K)\cong (\cO_K/I)^\times.
\]
Thus, by the Chinese remainder theorem, 
\[
p\nmid \abs{(\cO_K/\fp_i^{m_i})^\times},\quad p\nmid \abs{(\cO_K/\fp_i^{n_i})^\times}
\]
for all $i$. As $\lcm(\fa,\fb)=\prod \fp_i^{\max(m_i,n_i)}$, we deduce that
\[
p\nmid \abs{(\cO_K/\lcm(\fa,\fb))^\times}=[\sR(\lcm(\fa,\fb)):K]. \qedhere
\]
\end{proof}

We can now prove Theorem~\ref{thmA} from the introduction.

\begin{Theorem*}
Let $K$ be an imaginary quadratic field of class number 1.
Let $p$ and $q$ be distinct primes ($\geq 5$) which split in $K$.
Let $\fr$ be a fixed ideal of $\cO_K$ coprime to $pq$ such that $\fr$ is a product of split primes.
Let $\cF= \sR(\fr q)$ and write $\sR(\fr q^\infty)^{\ac}/\cF$ for the anticyclotomic $\Zq$-extension.
Assume that $p\nmid[\cF:K]$.
Then, there exists an integer $N$ such that for all $n\ge N$,
\[
\ord_p(h(\cF_n))=\ord_p(h(\cF_N)).
\]
\end{Theorem*}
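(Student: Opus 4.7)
The plan is to reduce Theorem~\ref{thmA} to the already-established Theorem~\ref{auxiliary} by enlarging $\fr$ to an ideal divisible by the conductor of an auxiliary CM elliptic curve. Specifically, I would first fix, using Section~\ref{S:CM}, a CM elliptic curve $E_{/K}$ whose Hecke character has conductor $\ff$ with $\ff$ a product of split primes and $p\nmid[\sR(\ff):K]$ (properties (b), (c) of $E$, applicable since $p\geq 5$ is split in $K$). Setting $\fg=\lcm(\fr,\ff)$, the ideal $\fg$ is coprime to $pq$, divisible by $\ff$, and a product of split primes, verifying the first hypotheses of Theorem~\ref{auxiliary}. The remaining condition $p\nmid[\sR(\fg q):K]$ follows by iterating Lemma~\ref{lemma: LCM}, combining $p\nmid[\cF:K]$, $p\nmid[\sR(\ff):K]$, and the divisibility $[\sR(q):K]\mid[\sR(\fr q):K]$ (which holds since $\sR(q)\subseteq\sR(\fr q)$).

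Next, writing $\tilde{\cF}=\sR(\fg q)$ and letting $\tilde{\cF}_n$ denote the $n$-th layer of the anticyclotomic $\Zq$-extension $\sR(\fg q^\infty)^{\ac}/\tilde{\cF}$, Theorem~\ref{auxiliary} applied to $\fg$ would yield an integer $N_0$ such that $\ord_p(h(\tilde{\cF}_n))=\ord_p(h(\tilde{\cF}_{N_0}))$ for all $n\geq N_0$. Since the anticyclotomic $\Zq$-extension of $K$ is intrinsic to $K$, one has $\tilde{\cF}_n=\tilde{\cF}\cdot\cF_n$, and hence $[\tilde{\cF}_n:\cF_n]$ divides $[\tilde{\cF}:\cF]=[\sR(\fg q):\sR(\fr q)]$, which is coprime to $p$ by the previous paragraph. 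The standard norm argument then provides an injection $A(\cF_n)\hookrightarrow A(\tilde{\cF}_n)$ on $p$-parts of class groups, giving $\ord_p(h(\cF_n))\leq\ord_p(h(\tilde{\cF}_n))$; similarly, since $[\cF_n:\cF_N]$ is a $q$-power, one obtains $A(\cF_N)\hookrightarrow A(\cF_n)$ for $n\geq N\geq 0$, so that $\ord_p(h(\cF_n))$ is non-decreasing in $n$.

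The conclusion will then be immediate: for $n\geq N_0$ the sequence $\ord_p(h(\cF_n))$ is non-decreasing and bounded above by the constant $\ord_p(h(\tilde{\cF}_{N_0}))$, and a bounded non-decreasing sequence of non-negative integers must stabilize. Beyond the substantive Iwasawa-theoretic content already packaged into Theorem~\ref{auxiliary}, the main subtlety I anticipate is purely the degree bookkeeping --- the iterated use of Lemma~\ref{lemma: LCM} together with the identification $\tilde{\cF}_n=\tilde{\cF}\cdot\cF_n$ --- which is what guarantees that passing between the $\fr$- and $\fg$-towers contributes only $p$-free degrees and thereby legitimizes the two injections $A(\cF_N)\hookrightarrow A(\cF_n)\hookrightarrow A(\tilde{\cF}_n)$ used to sandwich $\ord_p(h(\cF_n))$.
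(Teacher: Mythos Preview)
Your proposal is correct and follows essentially the same route as the paper: choose the auxiliary CM curve from \S\ref{S:CM}, set $\fg=\lcm(\fr,\ff)$, apply Lemma~\ref{lemma: LCM} to get $p\nmid[\sR(\fg q):K]$, invoke Theorem~\ref{auxiliary} for the $\fg$-tower, and then sandwich $\ord_p(h(\cF_n))$ between the non-decreasing injection $A(\cF_N)\hookrightarrow A(\cF_n)$ and the bound coming from $A(\cF_n)\hookrightarrow A(\tilde{\cF}_n)$. Two cosmetic remarks: a single application of Lemma~\ref{lemma: LCM} with $\fa=\fr q$ and $\fb=\ff$ suffices (since $\ff$ is coprime to $q$, $\lcm(\fr q,\ff)=\fg q$), so no iteration is needed; and the precise equality $\tilde{\cF}_n=\tilde{\cF}\cdot\cF_n$ may be off by a fixed index shift depending on how much of $K^{\ac}_\infty$ already sits inside $\cF$ versus $\tilde{\cF}$, but $\cF_n\subseteq\tilde{\cF}_n$ with $p\nmid[\tilde{\cF}_n:\cF_n]$ holds regardless, which is all you use.
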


\begin{proof}
Let ${E}_{/K}$ be a CM elliptic curve of conductor $\ff$ such that all the prime divisors of $\ff$ are split in $K$ but the prime divisors (which are $\geq 5$) of $[\sR(\ff):K]$ are not split in $K$.
Such elliptic curves exist as we have seen in \S\ref{S:CM}.

Let $\fr$ be any ideal of $\cO_K$ and $p,q$ be two distinct primes satisfying the hypotheses in the statement of the theorem.
Set $\fg = \lcm(\ff,\fr)$ and define $F = \sR(\fg q)$.
By assumption, $p\nmid [\sR(\fr q):K]$ and we have chosen our auxiliary CM elliptic curve so that $p\nmid [\sR(\ff):K]$.
Thus, it follows from Lemma~\ref{lemma: LCM} that $p\nmid [\sR(\fg q):K]$.  Furthermore, both $\ff$ and $\fr$ are only divisible by split primes. 
Therefore, Theorem~\ref{auxiliary} holds for the ideal $\fg$.

Since $p\nmid [F_n: \cF_n]$ and $p\nmid[\cF_{n+1}:\cF_n]$ for all $n\geq 0$, we have
\[
A(\cF_n) \hookrightarrow A(F_n), \quad A(\cF_n) \hookrightarrow A(\cF_{n+1}).
\]
Theorem~\ref{auxiliary} asserts that $\ord_p(h(F_n))$ stabilizes as $n\rightarrow \infty$.
Hence, the same is true for $\ord_p(h(\cF_n))$.
\end{proof}

\section{Asymptotic growth of fine Selmer groups of abelian varieties}

\subsection{Definition of fine Selmer groups}\label{sec:notation}
Suppose $F$ is a number field.
Throughout, $A_{/F}$ is a fixed abelian variety.
We fix a finite set $S$ of primes of $F$ containing $p$, the primes dividing the conductor of $A$, as well as the Archimedean primes.
We write $S_f$ to denote the set of finite primes.
Denote by $F_S$, the maximal algebraic extension of $F$ unramified outside $S$.
For every (possibly infinite) extension $L$ of $F$ contained in $F_S$, write $G_S\left({L}\right) = \Gal\left(F_S/{L}\right)$.
Write $S\left(L\right)$ for the set of primes of $L$ above $S$.
If $L$ is a finite extension of $F$ and $w$ is a place of $L$, we write $L_w$ for its completion at $w$; when $L/F$ is infinite, it is the union of completions of all finite sub-extensions of $L$.

\begin{Defi}
Let $L/F$ be an algebraic extension.
The \emph{$p$-primary fine Selmer group of $A$} over $L$ is defined as
\[
\Sel_0(A/L)=\ker\left( H^1\left(G_S\left(L\right), A[p^\infty]\right) \rightarrow \bigoplus_{v\in S} H^1\left(L_v, A[p^\infty]\right)\right).
\]
Similarly, the \emph{$p$-fine Selmer group of $A$} over $L$ is defined as
\[
\Sel_0(A[p]/L)=\ker\left( H^1\left(G_S\left(L\right), A[p]\right) \rightarrow \bigoplus_{v\in S} H^1\left(L_v, A[p]\right)\right).
\]
\end{Defi}

Note that $\Sel_0(A/L)$ is independent of the choice of $S$, whereas the definition of $\Sel_0(A[p]/L)$ depends on $S$; see for example \cite[Lemma~4.1 and p.~86]{LM16}.
Since our main result concerns $\Sel_0(A/L)$, we suppress $S$ from the notation of $\Sel_0(A[p]/L)$ for simplicity.

It is easy to observe that if $F_\infty/F$ is an infinite extension,
\[
\Sel_0\left( A/F_\infty\right) = \varinjlim_L \Sel_0\left(A/L \right), \quad \Sel_0\left( A[p]/F_\infty\right) = \varinjlim_L \Sel_0\left(A[p]/L \right),
\]
where the inductive limits are taken with respect to the restriction maps and $L$ runs over all finite extensions of $F$ contained in $F_\infty$.
Next, we define the notion of $p$-rank of an abelian group $G$.
\begin{Defi}
Let $G$ be an abelian group.
Define the \emph{$p$-rank} of $G$ as
\[
r_p (G) = r_p (G[p]) := \dim_{\mathbb{F}_p}\left(G[p]\right).
\] 
\end{Defi}

\subsection{Growth of fine Selmer groups in \texorpdfstring{$\Zq$}{}-extensions}
In this section, we prove the following theorem which essentially says that the $p$-part of the class group and the $p$-primary fine Selmer group have similar growth behaviour in $\Zq$-extensions.
Our result is motivated by \cite[Section~5]{LM16}.

\begin{Th}
\label{class group and fine Selmer group}
Let $A$ be a $d$-dimensional abelian variety defined over a number field $F$.
Let $S(F)$ be a finite set of primes in $F$ consisting precisely of the primes above $q$, the primes of bad reduction of $A$, and the Archimedean primes.
Let $F_\infty/F$ be a fixed $\Zq$ extension such that primes in $S_{f}(F)$ are finitely decomposed in $F_\infty/F$ and suppose $[F_n:F] = q^n$.
Further suppose that $A[p]\subseteq A(F)$.
Then as $n \rightarrow\infty$,
\[
\abs{r_{p}\left(\Sel_0\left(A/F_n\right)\right) - 2d r_{p}\left(\Cl(F_n)\right)} =O(1).
\]
\end{Th}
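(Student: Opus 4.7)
The plan is to introduce the $p$-torsion fine Selmer group $\Sel_0(A[p]/F_n)$ as a pivot and show that each of $r_p(\Sel_0(A/F_n))$ and $2d\cdot r_p(\Cl(F_n))$ agrees with $r_p(\Sel_0(A[p]/F_n))$ up to $O(1)$ as $n\to\infty$. The hypothesis $A[p]\subseteq A(F)$ is the decisive input: it trivialises the Galois action on $A[p]$ (yielding a class field theoretic description of $\Sel_0(A[p]/F_n)$) and forces the image of $\rho_p\colon G_F\to\GL(T_pA)$ to be pro-$p$ (which keeps the $p^\infty$-torsion of $A$ over $F_n$ bounded).

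Set $S^{\ast}=S(F)\cup\{v:v\mid p\}$, a valid defining set for $\Sel_0(A[p]/F_n)$ as in \S\ref{sec:notation}. Because $A[p]\simeq\mathbb{F}_p^{2d}$ as a $G_F$-module,
\[
H^1(G_{S^{\ast}}(F_n),A[p])\simeq\Hom(G_{S^{\ast}}(F_n)^{\mathrm{ab}},\mathbb{F}_p)^{2d},
\]
and the fine local conditions translate into the requirement that every representing homomorphism be trivial on each decomposition subgroup at a prime in $S^{\ast}(F_n)$. Class field theory then identifies $\Sel_0(A[p]/F_n)\simeq\Hom(Y_n,\mathbb{F}_p)^{2d}$, where $Y_n$ is the Galois group of the maximal unramified abelian $p$-extension of $F_n$ in which every prime of $S^{\ast}(F_n)$ splits completely; equivalently, $Y_n$ is the quotient of $\Cl(F_n)\otimes\Zp$ by the subgroup generated by the ideal classes of the primes in $S^{\ast}(F_n)$. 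Hence $|r_p(Y_n)-r_p(\Cl(F_n))|\leq|S^{\ast}(F_n)|$.

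For the comparison of $\Sel_0(A[p]/F_n)$ with $\Sel_0(A/F_n)[p]$, I would apply the snake lemma to the map from the global Kummer sequence attached to $0\to A[p]\to A[p^\infty]\xrightarrow{p}A[p^\infty]\to 0$ to its local analogues at the primes in $S^{\ast}(F_n)$. This produces a four-term exact sequence whose outer terms sit inside $A(F_n)[p^\infty]/p$ and $\bigoplus_{v\in S^{\ast}(F_n)}A(F_{n,v})[p^\infty]/p$. The first of these is constant in $n$: since $A[p]\subseteq A(F)$, the image of $\rho_p$ lies in the pro-$p$ kernel of reduction mod $p$ in $\GL_{2d}(\Zp)$, so its only subgroup of index dividing $q^n$ is itself, whence $A(F_n)[p^\infty]=A(F)[p^\infty]$. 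Each local summand $A(F_{n,v})[p^\infty]/p$ embeds into $A[p]$ and therefore contributes at most $2d$ to the $p$-rank of the last term.

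Combining the two comparisons yields
\[
\bigl|r_p(\Sel_0(A/F_n))-2d\cdot r_p(\Cl(F_n))\bigr|\le C\cdot|S^{\ast}(F_n)|+O(1)
\]
for a constant $C=C(d)$. Under the stated hypothesis, every prime of $S^{\ast}(F_n)\setminus\{v\mid p\}$ is finitely decomposed in $F_\infty/F$, so its contribution is $O(1)$. The step I expect to be the most delicate is the analysis of the primes above $p$: the theorem's hypothesis does not require these to be finitely decomposed, so one must either exploit the independence of $\Sel_0(A/F_n)$ from the choice of $S$ (allowing $\Sel_0(A[p]/F_n)$ to be redefined with a set that avoids the primes above $p$) or show that the local contributions at $v\mid p$ are absorbed by a matching adjustment of the class group quotient $Y_n$. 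Making this last step robust is the heart of the argument.
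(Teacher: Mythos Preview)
Your outline is the paper's proof: pivot through $\Sel_0(A[p]/F_n)$, identify it with $\Hom(\Cl_S(F_n),A[p])\simeq\Cl_S(F_n)[p]^{2d}$ by class field theory (your $Y_n$ is the paper's $\Cl_S(F_n)$), bound $|r_p(\Cl)-r_p(\Cl_S)|$ by $2|S_f(F_n)|$ (Lemma~\ref{lemma relate S class group to the regular one}), and compare $\Sel_0(A[p]/F_n)$ with $\Sel_0(A/F_n)[p]$ via the snake lemma on the Kummer diagram (Lemma~\ref{lem:control-p}). One simplification: your pro-$p$ argument giving $A(F_n)[p^\infty]=A(F)[p^\infty]$ is correct but unnecessary; once $A[p]\subseteq A(F_n)$ one already has $r_p\bigl(A(F_n)[p^\infty]/p\bigr)=2d$, and that is all the snake-lemma bound uses.

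You are right that the primes above $p$ are the delicate point, and the paper in fact elides it. The Kummer diagram for $A[p^\infty]$ requires $p$ to lie in the defining set, after which the bound on $\ker(\gamma_n)$ is $2d|S_f(F_n)|$ \emph{including} the primes above $p$. Neither of your suggested repairs works cleanly: dropping $p$ from $S$ destroys the $G_S$-module structure on $A[p^\infty]$ needed for the comparison, and there is no evident cancellation between the extra local summands at $v\mid p$ and the extra generators of the kernel $\Cl(F_n)\twoheadrightarrow Y_n$. The honest remedy is to include the primes above $p$ in $S$ and in the finite-decomposition hypothesis; this holds in all the applications the paper has in mind (cyclotomic $\Zq$-extensions of abelian fields, and the anticyclotomic towers of Theorem~\ref{thmA}) and should be read as implicit.
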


If $A[p]\subseteq A(F)$, then the action of $G_F$ on $A[p]$ is trivial.
Let $A^{\vee}$ be the dual abelian variety.
The action on the dual representation, $A^{\vee}[p]$ is also trivial.
This tells us that $A^{\vee}[p] \subseteq A^{\vee}(F)$.
Therefore, Theorem~\ref{class group and fine Selmer group} allows us to deduce the following result.
\begin{Cor}
With the same hypothesis as in Theorem~\ref{class group and fine Selmer group}
\[
\abs{r_{p}\left(\Sel_0\left(A/F_n\right)\right) - r_{p}\left(\Sel_0\left(A^{\vee}/F_n\right)\right)} =O(1).
\]
\end{Cor}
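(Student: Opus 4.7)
The plan is straightforward: apply Theorem~\ref{class group and fine Selmer group} twice, once to $A$ and once to the dual abelian variety $A^\vee$, and then use the triangle inequality to eliminate the common term $2d\, r_p(\Cl(F_n))$.

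The first step is to verify that the hypotheses of Theorem~\ref{class group and fine Selmer group} transfer from $A$ to $A^\vee$. Since $A^\vee$ has the same dimension $d$ as $A$ and the same set of primes of bad reduction, the set $S(F)$ is unchanged, and the finite-decomposition hypothesis on $S_f(F)$ in $F_\infty/F$ is inherited automatically. The condition actually requiring justification is $A^\vee[p] \subseteq A^\vee(F)$. The assumption $A[p] \subseteq A(F)$ forces the $G_F$-action on $A[p]$ to be trivial; via the Weil pairing (or equivalently, because the determinant of the mod-$p$ Galois representation on $A$ is a power of the mod-$p$ cyclotomic character) this implies $\mu_p \subseteq F$. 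Using the canonical Galois-equivariant identification $A^\vee[p] \cong \Hom_{\mathbb{F}_p}(A[p],\mu_p)$, it follows that $G_F$ also acts trivially on $A^\vee[p]$, so $A^\vee[p] \subseteq A^\vee(F)$.

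With the hypotheses in place for both abelian varieties, Theorem~\ref{class group and fine Selmer group} applied separately to $A$ and to $A^\vee$ yields
\[
\bigl| r_p(\Sel_0(A/F_n)) - 2d\, r_p(\Cl(F_n)) \bigr| = O(1), \qquad \bigl| r_p(\Sel_0(A^\vee/F_n)) - 2d\, r_p(\Cl(F_n)) \bigr| = O(1).
\]
Combining these two estimates with the triangle inequality immediately produces the claimed bound $\bigl| r_p(\Sel_0(A/F_n)) - r_p(\Sel_0(A^\vee/F_n)) \bigr| = O(1)$.

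I do not anticipate any genuine obstacle; the corollary is a formal consequence of Theorem~\ref{class group and fine Selmer group} together with the observation that the hypotheses are self-dual in this sense. The only step with any content is the verification $A^\vee[p] \subseteq A^\vee(F)$, and that is a standard Weil pairing argument once one has noted that $A[p] \subseteq A(F)$ implies $\mu_p \subseteq F$.
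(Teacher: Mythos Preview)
Your approach matches the paper's exactly: both verify that $A[p]\subseteq A(F)$ forces $A^\vee[p]\subseteq A^\vee(F)$ and then apply Theorem~\ref{class group and fine Selmer group} to each of $A$ and $A^\vee$, combining the two estimates via the triangle inequality. One minor caution: your parenthetical determinant argument only yields $\bar\chi_p^{\,d}=1$, which does not by itself force $\mu_p\subseteq F$ when $\gcd(d,p-1)>1$; the Weil-pairing route you name first (using a polarization over $F$ whose restriction to $A[p]$ is nonzero) is what actually gives $\mu_p\subseteq F$ and hence the triviality of $A^\vee[p]\cong\Hom(A[p],\mu_p)$.
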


To prove Theorem~\ref{class group and fine Selmer group}, we need a few lemmas.

\begin{Lemma}
\label{lemma 3.2 LM16}
Consider the following short exact sequence of co-finitely generated abelian groups
\[
P \rightarrow Q \rightarrow R \rightarrow S.
\]
Then,
\[
\abs{ r_{p}\left( Q\right) - r_{p}\left( R\right)} \leq 2r_{p}\left(P\right) + r_{p} \left(S\right).
\]
\end{Lemma}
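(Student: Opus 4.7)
The plan is to split the four-term exact sequence into two short exact sequences and apply the snake lemma to multiplication by $p$ on each. Let $X\subseteq Q$ denote the image of $P\to Q$ (equivalently $\ker(Q\to R)$), let $Y\subseteq R$ denote the image of $Q\to R$ (equivalently $\ker(R\to S)$), and let $Z\subseteq S$ denote the image of $R\to S$. This produces
\[
0\to X\to Q\to Y\to 0 \qquad\text{and}\qquad 0\to Y\to R\to Z\to 0,
\]
together with a surjection $P\twoheadrightarrow X$ and an inclusion $Z\hookrightarrow S$.

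The main auxiliary step is to establish, for every short exact sequence $0\to A\to B\to C\to 0$ of cofinitely generated $p$-primary abelian groups, the pair of elementary bounds
\[
0\le r_p(B)-r_p(A)\le r_p(C), \qquad \abs{r_p(B)-r_p(C)}\le r_p(A).
\]
The first is immediate from the left exact sequence $0\to A[p]\to B[p]\to C[p]$. For the second, the snake lemma applied to multiplication by $p$ gives the six-term exact sequence
\[
0\to A[p]\to B[p]\to C[p]\to A/pA\to B/pB\to C/pC\to 0,
\]
from which $r_p(B)-r_p(C)\le r_p(A)$ follows directly, and $r_p(C)-r_p(B)\le r_p(A/pA)$ is upgraded to $r_p(A)$ via the structure theorem $A\cong(\Qp/\Zp)^r\oplus T$ with $T$ finite, which forces $r_p(A/pA)=\dim_{\bF_p}T/pT=\dim_{\bF_p}T[p]\le r_p(A)$.

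With these bounds in hand, the rest is assembly. Applying them to $0\to X\to Q\to Y\to 0$ yields $\abs{r_p(Q)-r_p(Y)}\le r_p(X)$, and applying them to $0\to Y\to R\to Z\to 0$, together with the observation $r_p(Z)\le r_p(S)$ coming from $Z[p]\subseteq S[p]$, yields $\abs{r_p(R)-r_p(Y)}\le r_p(S)$. The triangle inequality then produces $\abs{r_p(Q)-r_p(R)}\le r_p(X)+r_p(S)$. Finally, to bound $r_p(X)\le 2r_p(P)$, I apply the same pair of inequalities to $0\to K\to P\to X\to 0$ with $K=\ker(P\to Q)$, obtaining $r_p(X)\le r_p(P)+r_p(K)$, and use $K[p]\subseteq P[p]$ to get $r_p(K)\le r_p(P)$.

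The factor of $2$ in the final bound is produced precisely by the possibly nontrivial kernel of $P\to Q$; had the four-term sequence also been assumed exact at $P$, the sharper bound $r_p(P)+r_p(S)$ would follow. The only nontrivial step is the second auxiliary inequality $\abs{r_p(B)-r_p(C)}\le r_p(A)$, which requires controlling the tail $A/pA\to B/pB\to C/pC$ of the snake sequence via the cofinitely generated $p$-primary structure; everything else is routine snake-lemma bookkeeping.
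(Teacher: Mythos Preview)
The paper does not give its own proof of this lemma; it simply cites \cite[Lemma~3.2]{LM16}. Your self-contained argument via the two auxiliary short exact sequences and the snake lemma for multiplication by $p$ is correct and is essentially the standard way to establish such a bound.

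One remark worth making explicit: you tacitly read ``cofinitely generated abelian groups'' as ``cofinitely generated $\Zp$-modules'' (hence $p$-primary of the form $(\Qp/\Zp)^r\oplus T$), which is what you need for the key inequality $\dim_{\bF_p}(A/pA)\le r_p(A)$. This is indeed the intended reading---the statement fails for arbitrary abelian groups (e.g.\ the exact sequence $\ZZ\xrightarrow{p}\ZZ\to\ZZ/p\to 0$ would give $1\le 0$), and every application in the paper is to $p$-primary groups. With that understanding your proof is complete; in fact under this hypothesis one can show directly that a surjection $P\twoheadrightarrow X$ forces $r_p(X)\le r_p(P)$ (dualize and compare minimal numbers of $\Zp$-generators), so the factor $2$ in the stated bound is not sharp, but your argument matches the lemma as written.
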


\begin{proof}
See \cite[Lemma~3.2]{LM16}.
\end{proof}

\begin{Lemma}
\label{lemma relate S class group to the regular one}
Let $F_\infty$ be any $\Zq$-extension of $F$ such that all the primes in $S_{f}(F)$ are finitely decomposed.
Let $F_n$ be the subfield of $F_\infty$ such that $[F_n: F] = q^{n}$.
Then
\[
\abs{r_{p}\left(\Cl(F_n)\right) - r_{p}\left(\Cl_S(F_n)\right)} = O(1).
\]
\end{Lemma}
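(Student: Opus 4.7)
The plan is to compare $\Cl(F_n)$ and $\Cl_S(F_n)$ via the standard localization exact sequence and then control the error using the finite-decomposition hypothesis together with Lemma~\ref{lemma 3.2 LM16}.

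First, I would recall that $\Cl_S(F_n)$ is by definition the quotient of $\Cl(F_n)$ by the subgroup $H_n$ generated by the classes of the finite primes of $F_n$ lying above $S_f(F)$. This yields the short exact sequence
\[
0\longrightarrow H_n \longrightarrow \Cl(F_n) \longrightarrow \Cl_S(F_n)\longrightarrow 0.
\]
Viewing this as a four-term exact sequence with a trivial group on the right, Lemma~\ref{lemma 3.2 LM16} applied to $P=H_n$, $Q=\Cl(F_n)$, $R=\Cl_S(F_n)$, and the trivial group as the fourth term gives
\[
\bigl\lvert r_p(\Cl(F_n))-r_p(\Cl_S(F_n))\bigr\rvert \le 2\,r_p(H_n).
\]

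Next, I would bound $r_p(H_n)$. Since $H_n$ is generated, as an abelian group, by the ideal classes of the primes of $F_n$ lying above $S_f(F)$, we have
\[
r_p(H_n)\le \#\{w\in S_f(F_n)\}=\sum_{v\in S_f(F)} \#\{w\mid v\text{ in }F_n\}.
\]
By the hypothesis that every prime in $S_f(F)$ is finitely decomposed in $F_\infty/F$, each summand on the right is bounded independently of $n$, and $S_f(F)$ is finite; hence $r_p(H_n)=O(1)$. Combining this with the inequality from Lemma~\ref{lemma 3.2 LM16} yields
\[
\bigl\lvert r_p(\Cl(F_n))-r_p(\Cl_S(F_n))\bigr\rvert =O(1),
\]
as required.

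I do not anticipate a serious obstacle here: the only point that deserves a little care is to verify that $H_n$ is actually generated by the prime classes above $S_f(F)$ (rather than only above $S(F)$), but this is automatic since the Archimedean primes contribute no ideal classes. The rest is a direct application of the preceding lemma combined with the finite-decomposition assumption.
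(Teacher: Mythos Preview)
Your proof is correct and follows essentially the same approach as the paper: the paper uses the exact sequence $\ZZ^{|S_f(F_n)|}\to\Cl(F_n)\xrightarrow{\alpha_n}\Cl_S(F_n)\to 0$ and bounds $r_p(\ker\alpha_n)$ by $|S_f(F_n)|$ before applying Lemma~\ref{lemma 3.2 LM16}, which is exactly your argument with $H_n=\ker\alpha_n$. The only cosmetic difference is that the paper cites the exact sequence from \cite[Lemma~10.3.12]{NSW08} rather than presenting it as the definition of $\Cl_S$.
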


\begin{proof}
For each $F_n$, we write $S_f(F_n)$ for the set of finite primes of $F_n$ above $S_f$.
For each $n$, we have the following exact sequence 
\[
\ZZ^{\abs{S_f(F_n)}} \longrightarrow\Cl(F_n) \xrightarrow{\alpha_n} \Cl_S(F_n) \longrightarrow 0
\] (see \cite[Lemma~10.3.12]{NSW08}).
Since the class group is always finite, it follows that $\ker(\alpha_n)$ is finite.
Also, $r_{p}\left(\ker(\alpha_n)\right) \leq \abs{S_f(F_n)}$ and $r_{p}\left(\ker(\alpha_n)/p\right)\leq \abs{S_f(F_n)}$.
By Lemma~\ref{lemma 3.2 LM16},
\[
\abs{r_{p}\left( \Cl(F_n)\right) - r_{p}\left( \Cl_S(F_n)\right)} \leq 2\abs{S_f(F_n)} = O(1). \qedhere
\]
\end{proof}

\begin{Lemma}\label{lem:control-p}
Let $F_\infty/F$ be a $\Zq$-extension and let $F_n$ be the subfield of $F_\infty$ such that $[F_n:F]=q^n$.
Let $A$ be an abelian variety defined over $F$.
Suppose that all primes of $S_{f}(F)$ are finitely decomposed in $F_\infty/F$.
Then
\[
\label{eqn: fine Selmer ell-fine Selmer}
\abs{ r_{p} \left( \Sel_0(A[p]/F_n)\right) - r_{p} \left( \Sel_0(A/F_n)\right)} =O(1).
\]
\end{Lemma}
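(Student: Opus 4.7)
The strategy is to apply the snake lemma to the Kummer sequence $0\to A[p]\to A[p^\infty]\xrightarrow{p} A[p^\infty]\to 0$. Taking $G_S(F_n)$-cohomology produces the short exact sequence
\[
0\to A[p^\infty](F_n)/p\to H^1(G_S(F_n),A[p])\to H^1(G_S(F_n),A[p^\infty])[p]\to 0,
\]
together with analogous local sequences at every $v\in S(F_n)$. Write $f$ for the natural diagonal localization map
\[
A[p^\infty](F_n)/p\;\longrightarrow\;\bigoplus_{v\in S(F_n)} A[p^\infty](F_{n,v})/p.
\]
Comparing the global sequence with the product of the local ones via a commutative diagram, the snake lemma yields the four-term exact sequence
\[
0\to \ker f\to \Sel_0(A[p]/F_n)\to \Sel_0(A/F_n)[p]\to \coker f,
\]
after identifying $\Sel_0(A/F_n)[p]$ with the kernel of the localization map on $H^1(G_S(F_n),A[p^\infty])[p]$, which is immediate since localization commutes with taking $p$-torsion.

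Since $r_p(\Sel_0(A/F_n))=r_p(\Sel_0(A/F_n)[p])$ by definition, feeding the four-term sequence into Lemma~\ref{lemma 3.2 LM16} reduces the problem to bounding $r_p(\ker f)$ and $r_p(\coker f)$ uniformly in $n$. Any $\Zp$-submodule of $A[p^\infty]\cong(\Qp/\Zp)^{2d}$ decomposes as a divisible part plus a finite $p$-group, the latter of $p$-rank at most $2d$; consequently both $A[p^\infty](F_n)/p$ and each $A[p^\infty](F_{n,v})/p$ have $p$-rank at most $2d$, and in particular $r_p(\ker f)\le 2d$. For the cokernel, Archimedean places contribute trivially because $A(F_{n,v})[p^\infty]$ is divisible for $v$ Archimedean and $p$ odd, so $\coker f$ is a quotient of a sum indexed by $S_f(F_n)$, giving $r_p(\coker f)\le 2d\cdot\abs{S_f(F_n)}$. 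The hypothesis that every prime in $S_f(F)$ is finitely decomposed in $F_\infty/F$ then bounds $\abs{S_f(F_n)}$ independently of $n$, yielding the desired $O(1)$ estimate.

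The argument is essentially bookkeeping once the diagram is in place. The most delicate step is spelling out the identification of $\Sel_0(A/F_n)[p]$ with the kernel of the localized $H^1(-,A[p^\infty])[p]$ map, so that the snake-lemma connecting homomorphism genuinely lands in $\coker f$ and not a larger group. Once this compatibility is verified the two rank bounds above are routine, since both rely only on the structure of $\Zp$-submodules of $(\Qp/\Zp)^{2d}$ and the finite-decomposition hypothesis.
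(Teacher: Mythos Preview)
Your proof is correct and follows essentially the same route as the paper's: both set up the commutative square coming from the Kummer sequence and apply the snake lemma together with Lemma~\ref{lemma 3.2 LM16}. Your diagram is the ``transpose'' of the paper's (you take the Kummer short exact sequences as rows and localization as the vertical maps, whereas the paper takes the Selmer-defining sequences as rows and the Kummer maps $s_n,f_n,\gamma_n$ as the vertical maps), but the content is identical---your $\ker f$ and $\coker f$ are exactly the paper's $\ker(f_n)$ and $\ker(\gamma_n)/\mathrm{im}(f)$.

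One small point worth noting: the paper's proof invokes $A[p]\subset A(F_n)$ to compute $A(F_n)[p^\infty]/p\simeq(\ZZ/p\ZZ)^{2d}$ exactly, even though this hypothesis does not appear in the lemma statement (it is ambient from Theorem~\ref{class group and fine Selmer group}). Your argument instead bounds the $p$-rank by $2d$ via the structure of $\Zp$-submodules of $(\Qp/\Zp)^{2d}$, which is cleaner and works without that hypothesis.
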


\begin{proof}
Consider the commutative diagram
\begin{align*}
\begin{matrix}
0&\rightarrow &\Sel_0(A[p]/F_n)&\rightarrow&H^1(G_S(F_n), A[p])&\rightarrow& \bigoplus_{v\in S(F_n)} H^1(F_{n,v_n}, A[p])
\cr \hbox{ } &&\Bigg\downarrow s_n &&\Bigg\downarrow f_n &&\Bigg\downarrow \gamma_n
\cr 0&\rightarrow &\Sel_0(A/F_n)[p]&\rightarrow &H^1(G_S(F_n), \ A[p^\infty])[p]&\rightarrow& \bigoplus_{v_n \in S(F_n)} H^1(F_{n,v_n}, \ A[p^\infty])[p] .
\end{matrix}
\end{align*}
Both $f_n$ and $\gamma_n$ are surjective.
Since $A[p]\subset A(F_n)$, the kernel of these maps are given by
\begin{align*}
\ker(f_n) & = A(F_n)[p^\infty]\big/{p}  \simeq \left( \ZZ/p\ZZ\right)^{2d},\\
\ker(\gamma_n) & = \bigoplus_{v_n\in S(F_n)} A(F_{n,v_n})[p^\infty]\big/p \simeq \bigoplus_{v_n\in S_f(F_n) }\left( \ZZ/p\ZZ\right)^{2d},
\end{align*}
where the last isomorphism follows from our assumption that $p$ is odd.

Observe that $r_{p} \left( \ker\left(s_n\right)\right) \leq r_{p} \left( \ker\left(f_n\right)\right) = 2d$ and that $r_{p} \left( \ker\left(\gamma_n \right)\right) = 2d \abs{ S_f(F_n)}$.
By hypothesis, $S_f(F_n)$ is bounded as $n$ varies.
It follows from the snake lemma that both $r_{p} \left( \ker\left( s_n \right)\right)$ and $r_{p} \left( \coker\left( s_n \right)\right)$ are finite and bounded.
Applying Lemma~\ref{lemma 3.2 LM16} to the following exact sequence
\[
0 \rightarrow \ker(s_n) \rightarrow \Sel_0(A[p]/F_n) \rightarrow \Sel_0(A/F_n)[p]\rightarrow \coker (s_n) \rightarrow 0
\]
completes the proof.
\end{proof}

\begin{proof}[Proof of Theorem~\ref{class group and fine Selmer group}]
By hypothesis, $A[p]\subseteq A(F)$.
Therefore, $A[p]\simeq (\ZZ/p)^{2d}$.
We have
\[
H^1\left( G_S(F_n), \ A[p]\right) = \Hom\left( G_S(F_n), \ A[p]\right).
\]
There are similar identifications for the local cohomology groups.
Thus,
\[
\Sel_{0}\left( A[p]/F_n\right) \ \simeq \Hom\left( \Cl_S(F_n), A[p]\right) \simeq \Cl_S(F_n)[p]^{2d}
\]
as abelian groups.
Therefore,
\[
r_{p}\left( \Sel_0\left( A[p]/F_n\right)\right) = 2d r_{p}\left( \Cl_S(F_n)\right).
\]
The theorem now follows from Lemmas~\ref{lemma relate S class group to the regular one} and~\ref{lem:control-p}.
\end{proof}

Let $p^{e_n}$ be the largest power of $p$ that divides the class number of $F_n$.
If $e_n$ is bounded then it follows (trivially) that the $p$-rank is bounded.
Thus, the following corollary is immediate.

\begin{Cor}
Let $p\neq q$.
Let $F/\QQ$ be any finite extension of $\QQ$ and $F_\infty/F$ be any $\Zq$-extension of $F$.
Let $p^{e_n}$ be the exact power of $p$ dividing the class number of the $n$-th intermediate field $F_n$.
Let $A_{/F}$ be an abelian variety such that $A[p]\subseteq A(F)$.
If $e_n$ is bounded as $n\rightarrow \infty$, then $r_{p} \left( \Sel_0\left( A/F_n\right)\right)$ is bounded independently of $n$.
\end{Cor}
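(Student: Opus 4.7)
The plan is to derive this corollary as an immediate consequence of Theorem~\ref{class group and fine Selmer group}. First I would translate the hypothesis on $e_n$ into a statement about $p$-ranks. Since $p^{e_n}$ is the exact power of $p$ dividing $h(F_n)=\abs{\Cl(F_n)}$, the $p$-Sylow subgroup of $\Cl(F_n)$ has order $p^{e_n}$. By the structure theorem for finite abelian $p$-groups, writing $\Cl(F_n)\{p\}\cong \bigoplus_i \ZZ/p^{a_i}\ZZ$ with $\sum_i a_i = e_n$, one has
\[
r_p(\Cl(F_n)) \;=\; \dim_{\mathbb{F}_p} \Cl(F_n)[p] \;=\; \#\{i\} \;\le\; \sum_i a_i \;=\; e_n.
\]
Hence if $(e_n)$ is bounded, then so is $\bigl(r_p(\Cl(F_n))\bigr)$.

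Second, I would invoke Theorem~\ref{class group and fine Selmer group}, understanding that the finite-decomposition hypothesis on primes in $S_f(F)$ required there is in force (as in Theorem~\ref{thmB}; I would state this assumption explicitly rather than leave it implicit). That theorem gives
\[
\bigl|\,r_p(\Sel_0(A/F_n)) - 2d\, r_p(\Cl(F_n))\,\bigr| \;=\; O(1),
\]
so combining the two bounds yields
\[
r_p(\Sel_0(A/F_n)) \;\le\; 2d\, r_p(\Cl(F_n)) + O(1) \;\le\; 2d\, e_n + O(1),
\]
which is bounded independently of $n$ by hypothesis.

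There is essentially no obstacle beyond what has already been carried out in the proof of Theorem~\ref{class group and fine Selmer group}; the corollary really is immediate once one observes that bounded $p$-valuation of the order of a finite abelian $p$-group controls its $p$-rank. The only minor point worth flagging is the finite-decomposition assumption implicit in the ambient setup, without which the error term $O(1)$ coming from $\abs{S_f(F_n)}$ in Lemmas~\ref{lemma relate S class group to the regular one} and~\ref{lem:control-p} could fail to be bounded.
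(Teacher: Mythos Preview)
Your argument is correct and matches the paper's approach exactly: the paper also observes that bounded $e_n$ trivially bounds $r_p(\Cl(F_n))$ and then invokes Theorem~\ref{class group and fine Selmer group}. Your remark about the finite-decomposition hypothesis is well taken, as the corollary tacitly relies on the standing assumptions of that theorem.
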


In addition to Theorem~\ref{thmA}, there are some other results in the literature where it is known that the $p$-part of the class group stabilizes in a $\Zq$-extension (when $p,q$ are distinct primes).
These were discussed briefly in the introduction and are recorded here more precisely.
\begin{enumerate}
\item (\cite[Theorem]{Was78})
Let $F/\QQ$ be an abelian extension of $\QQ$ and $F_\infty/F$ be the cyclotomic $\Zq$-extension of $F$.
If $p^{e_n}$ be the exact power of $p$ dividing the class number of the $n$-th intermediate field $F_n$, then $e_n$ is bounded as $n\rightarrow \infty$.
\item (\cite[Theorem~7.10]{Lam15}) 
Let $p,q$ be fixed odd distinct primes both $\geq 5$, $K$ be an imaginary quadratic field of class number 1 where $p$ and $q$ split, and $E_{/K}$ be an elliptic curves with CM by $\cO_K$ and good reduction at $p,q$.
Let $K_\infty$ be the $\Zq$ extensions of $K$ which is unramified outside $\fq$ (resp. $\overline{\fq}$).
Let $\fg$ be a fixed ideal of $\cO_K$ such that it is coprime to $pq$
and $F=\sR(\fg \fq)$ is of degree prime-to-$p$ over $F$.
Then, the $p$-part of the class number stabilizes in $FK_\infty=\sR(\fg \fq^\infty)$.
However, since $p$ is assumed to be unramified in $F$ in \emph{loc.~cit.}, the hypothesis $A[p]\subseteq A(F)$ in Theorem~\ref{class group and fine Selmer group} is unlikely to hold.
The same can be said regarding the setting studied in Theorem~\ref{thmA}.
\end{enumerate}

\begin{Th}
\label{whole FSG stabilizes}
With notation as above, suppose that the $p$-rank of the fine Selmer group, denoted by $r_p\left(\Sel_0(A/F_n) \right)$ stabilizes in a $\Zq$-extension of $F$.
Then there exists $n\ge 0$, such that for all $m\geq n$, the restriction map induces an isomorphism
\[
\Sel_{0}(A/F_n) \simeq \Sel_{0}(A/F_m).
\]
\end{Th}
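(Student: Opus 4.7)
The plan is to identify $\Sel_0(A/F_n)$ with the $\Gal(F_\infty/F_n)$-invariants of $M:=\Sel_0(A/F_\infty)$, use the hypothesis on $p$-ranks to force $\Gal(F_\infty/F_N)$ to act trivially on $M[p]$ for $N$ large, and then exploit the fact that $\Gal(F_\infty/F)\cong\Zq$ is pro-$q$ while $M$ is $p$-primary to upgrade this to a trivial action on all of $M$.

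For the first step, write $G_n=\Gal(F_\infty/F_n)$. Since $G_n$ is pro-$q$ and $A[p^\infty]$ is $p$-primary with $p\neq q$, the groups $H^i(G_n,-)$ vanish in positive degrees on any $p$-primary discrete module, so inflation--restriction gives $H^1(G_S(F_n), A[p^\infty])\xrightarrow{\sim} H^1(G_S(F_\infty), A[p^\infty])^{G_n}$. The same vanishing at decomposition groups, together with Shapiro's lemma (using the finite decomposition of primes in $S_f(F)$) and the triviality of $H^1$ at archimedean places for odd $p$, identifies $\bigoplus_{v\in S(F_n)} H^1(F_{n,v}, A[p^\infty])$ with the $G_n$-invariants of its $F_\infty$-counterpart. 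The five-lemma then yields $\Sel_0(A/F_n)\simeq M^{G_n}$, compatibly with restriction.

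Next, the hypothesis that $r_p(\Sel_0(A/F_n))=\dim_{\mathbb{F}_p}M[p]^{G_n}$ stabilizes at $n=N$ implies, since $\{M[p]^{G_n}\}$ is an increasing chain of subspaces of $M[p]$ whose union is $M[p]$, that $M[p]=M[p]^{G_N}$; in particular $M[p]$ is finite and $G_N$ acts trivially on it. Pontryagin duality and Nakayama (the compact dual $X:=M^\vee$ satisfies $X/pX\cong M[p]^\vee$, finite, whence $X$ is finitely generated over $\Zp$) then give that $M$ is cofinitely generated, so every $M[p^k]$ is finite.

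The main obstacle is upgrading triviality on $M[p]$ to triviality on all of $M$. Fix $\gamma\in G_N$; I would show by induction on $k$ that $\gamma$ acts trivially on $M[p^k]$. Setting $\varepsilon=\gamma-1$ on $M[p^k]$, the relation $p\varepsilon(m)=\varepsilon(pm)=0$ (by induction, as $pm\in M[p^{k-1}]$) shows $\varepsilon(M[p^k])\subseteq M[p]$, while $\varepsilon|_{M[p]}=0$ also by induction, so $\varepsilon^2=0$. Since $\Gal(F_\infty/F)\cong\Zq$ acts on the finite group $M[p^k]$ through a finite $q$-quotient, $\gamma^{q^a}=1$ on $M[p^k]$ for some $a$; but $(1+\varepsilon)^{q^a}=1+q^a\varepsilon$ by $\varepsilon^2=0$, so $q^a\varepsilon=0$, and since $\varepsilon$ takes values in the $p$-primary module $M[p]$ on which $q^a$ is invertible, $\varepsilon=0$. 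The disparity $p\neq q$ is essential here: without it, unipotent actions such as multiplication by $1+p$ on $\ZZ/p^2$ would be a genuine obstruction. Hence $G_N$ acts trivially on $M=\bigcup_k M[p^k]$, so $\Sel_0(A/F_N)=M^{G_N}=M$; for every $m\ge N$ the restriction map $\Sel_0(A/F_N)\to\Sel_0(A/F_m)=M^{G_m}=M$ becomes the identity on $M$, and is therefore an isomorphism.
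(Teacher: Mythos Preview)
Your proof is correct but follows a genuinely different route from the paper's. The paper works entirely at finite level: it observes that restriction $\Sel_0(A/F_n)\to\Sel_0(A/F_m)$ is a split injection (because $\mathrm{cores}\circ\mathrm{res}$ is multiplication by the $q$-power degree $[F_m:F_n]$, which is invertible on a $p$-primary group), writes each $\Sel_0(A/F_n)\cong(\Qp/\Zp)^{s_n}\oplus T_n$ with $T_n$ finite, shows first that $s_n$ stabilizes (bounded by the $p$-rank and nondecreasing under injection), then that the splitting forces $T_m\cong T_n\oplus C_{m,n}$ with $C_{m,n}$ finite once $s_m=s_n$, and finally that the stabilization of $r_p(T_n)$ gives $r_p(C_{m,n})=0$, hence $C_{m,n}=0$.

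You instead pass to the limit $M=\Sel_0(A/F_\infty)$, establish a control theorem $\Sel_0(A/F_n)\simeq M^{G_n}$ from the vanishing of pro-$q$ cohomology on $p$-primary modules, and then prove the sharper statement that $G_N$ acts trivially on all of $M$ via your nilpotence-plus-order argument on each $M[p^k]$. Your approach yields the bonus conclusion $\Sel_0(A/F_N)=\Sel_0(A/F_\infty)$ and makes the role of $p\ne q$ very explicit (it is used twice: once for the control theorem, once to kill the unipotent $\varepsilon$). The paper's approach is slightly more self-contained in that it avoids the control identification altogether and, in particular, does not need the finite-decomposition hypothesis on $S_f$ that you invoke to match the semi-local terms; on the other hand, your argument is cleaner structurally and would adapt more readily to other towers where a $p\ne q$ control theorem is available.
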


\begin{proof}
The following argument is similar to the one presented in \cite[p.~15]{Lam-thesis}, where instead of classical Selmer groups, we consider fine Selmer groups.
Consider the extension $F_m/F_n$.
Then $[F_m : F_n] = q^{m-n} = t$ (say).
The restriction map
\[
\Gal\left( \overline{F}/F_n\right) \longrightarrow \Gal\left( \overline{F}/F_m\right)
\]
induces the restriction homomorphism
\[
\textrm{res}: \Sel_0(A/F_{n}) \longrightarrow \Sel_0(A/F_{m}).
\]
Since $\gcd(q, p)=1$, this maps is an injection.
Moreover, we have
\[
\Sel_0(A/F_{n}) \xrightarrow{\rm{res}} \Sel_0(A/F_{m}) \xrightarrow{\rm{cores}} \Sel_0(A/F_{n}) \xrightarrow{t^{-1}} \Sel_0(A/F_{n})
\]
where $\rm{cores} \circ \rm{res} = t$.
The composition $\rm{res}\circ \rm{cores} \circ t^{-1}$ is the identity map; thus, the exact sequence
\[
0 \longrightarrow \Sel_0(A/F_{n}) \longrightarrow \Sel_0(A/F_{m}) \longrightarrow \Sel_0(A/F_{m})\big/ \Sel_0(A/F_{n}) \longrightarrow 0
\]
is split exact.

Let us write $\Sel_0(A/F_n)=(\Qp/\Zp)^{s_n}\oplus T_n$, where $s_n\ge 0$ and $T_n$ is a finite $p$-group.
Then,
\[
r_p\left(\Sel_0(A/F_n)\right)=s_n+r_p(T_n).
\]
The injection $ \Sel_0(A/F_{n}) \hookrightarrow \Sel_0(A/F_{m}) $ tells us that $s_m\ge s_n$.
If the $p$-rank $r_p\left(\Sel_0(A/F_n)\right)$ eventually stabilizes it follows that $s_n$ also stabilizes.
Denote the cokernel of the injection by $C_{m,n}$.
By duality, we have the short exact sequence
\[
0\rightarrow C_{m,n}^\vee\rightarrow \Zp^{s_m}\oplus T_m^{\vee}\rightarrow \Zp^{s_n}\oplus T_n^{\vee}\rightarrow 0.
\]
When $s_m=s_n$, $C_{m,n}^\vee$ must be finite.
Consequently, the image of $C_{m,n}^\vee$ in $\Sel_0(A/F_n)^\vee$ is contained inside $T_m^{\vee}$.
Furthermore, since the short exact sequence splits, we deduce the isomorphism
\[T_m=T_n\oplus C_{m,n}.\]
As $s_n$ stabilizes, $r_p(T_n)$ also stabilizes.
Therefore, $C_{m,n}$ has to be $0$ eventually.
\end{proof}

Theorem~\ref{thmB} is now an immediate corollary of Theorems \ref{class group and fine Selmer group} and \ref{whole FSG stabilizes}.
\begin{Cor}
Let $p,q$ be distinct odd primes.
Let $F$ be any number field and $A_{/F}$ be an abelian variety such that $A[p]\subseteq A(F)$.
Let $F_\infty/F$ be a $\Zq$-extension where the primes above $q$ and the primes of bad reduction of $A$ are finitely decomposed.
If the $p$-part of the class group stabilizes, i.e., there exists $N\ge 0$ such that for all $n\geq N$,
\[
\ord_{p}(h(F_n)) = \ord_{p}(h(F_N)),
\]
then the growth of the $p$-primary fine Selmer group stabilizes in the $\Zq$-extension as well, i.e., there exists an integer $N'\ge N$ such that for all $n\geq N'$, the restriction map induces an isomorphism
\[
\Sel_0(A/F_n) \simeq\Sel_0(A/F_{N'}).
\]
\end{Cor}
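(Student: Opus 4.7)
The plan is to derive the corollary directly by chaining Theorem \ref{class group and fine Selmer group} with Theorem \ref{whole FSG stabilizes}: the first converts stabilization of $\ord_p(h(F_n))$ into boundedness of the $p$-rank of the fine Selmer group, and the second upgrades rank-stabilization to an honest isomorphism. The only intermediate task is to argue that the rank sequence is eventually constant, not merely bounded.

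The first step is to translate the hypothesis into a statement about $p$-ranks. If $\ord_p(h(F_n))$ is constant for $n \geq N$, then the $p$-primary part of $\Cl(F_n)$ has bounded order, so in particular $r_p(\Cl(F_n))$ is uniformly bounded. Feeding this into Theorem \ref{class group and fine Selmer group}, which asserts $|r_p(\Sel_0(A/F_n)) - 2d\, r_p(\Cl(F_n))| = O(1)$, immediately shows that $r_p(\Sel_0(A/F_n))$ is bounded as $n \to \infty$.

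The second step is monotonicity. The restriction map $\Sel_0(A/F_n) \hookrightarrow \Sel_0(A/F_m)$ is injective for $m \geq n$, because the composition $\mathrm{cor}\circ\mathrm{res}$ is multiplication by the prime-to-$p$ degree $[F_m:F_n] = q^{m-n}$, hence invertible on the $p$-primary group $\Sel_0(A/F_n)$; this is precisely the argument used at the start of the proof of Theorem \ref{whole FSG stabilizes}. An injection of cofinitely generated abelian $p$-groups induces an injection on $p$-torsion, so $r_p(\Sel_0(A/F_n)) \leq r_p(\Sel_0(A/F_m))$. A bounded non-decreasing sequence of non-negative integers must eventually be constant, so there exists $N' \geq N$ with $r_p(\Sel_0(A/F_n)) = r_p(\Sel_0(A/F_{N'}))$ for all $n \geq N'$. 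Applying Theorem \ref{whole FSG stabilizes} at this $N'$ then yields the desired isomorphism $\Sel_0(A/F_n) \simeq \Sel_0(A/F_{N'})$ for every $n \geq N'$.

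I do not anticipate any genuine obstacle: all the nontrivial content has been absorbed into Theorems \ref{class group and fine Selmer group} and \ref{whole FSG stabilizes}, so the remaining work is purely formal. The one point that deserves a moment of care is confirming that boundedness together with injectivity of restriction forces eventual stabilization of the rank, rather than just finiteness of its limit superior, but this is immediate for integer-valued monotone sequences.
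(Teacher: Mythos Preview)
Your proposal is correct and follows essentially the same approach as the paper, which simply states that the corollary is immediate from Theorems~\ref{class group and fine Selmer group} and~\ref{whole FSG stabilizes}. You are just being more explicit than the paper about the step bridging ``bounded'' to ``stabilizes'' for $r_p(\Sel_0(A/F_n))$, using the monotonicity coming from injectivity of restriction---an argument already implicit in the proof of Theorem~\ref{whole FSG stabilizes}.
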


\bibliographystyle{amsalpha}
\bibliography{references}
\end{document}